\let\mathcal\relax
\DeclareMathAlphabet\mathcal{OMS}        {cmsy}{m}{n}
\newtheorem{theorem}{Theorem}[section]
\newtheorem{corollary}[theorem]{Corollary}
\newtheorem{proposition}[theorem]{Proposition}
\theoremstyle{definition}
\theoremstyle{remark}
\newtheorem*{remark}{Remark}
\begin{document}

\title{Implicit-Explicit Variational Integration of Highly Oscillatory Problems}

\author[A.~Stern]{Ari Stern}
\address{Ari Stern\\
Department of Applied and Computational Mathematics\\
California Institute of Technology\\
Pasadena, California 91125}
\curraddr{Department of Mathematics\\
University of California, San Diego\\
La Jolla, California 92093-0112}
\email{astern@math.ucsd.edu}
\thanks{First author's research partially supported by a Gordon and
  Betty Moore Foundation fellowship at Caltech, and by NSF grant
  CCF-0528101.}

\author[E.~Grinspun]{Eitan Grinspun}
\address{Eitan Grinspun\\
Department of Computer Science\\
Columbia University\\
New York, New York 10027}
\email{eitan@cs.columbia.edu} \thanks{Second author's research
  partially supported by the NSF (MSPA Award No. IIS-05-28402, CSR
  Award No. CNS-06-14770, CAREER Award No. CCF-06-43268).}

\subjclass[2000]{65P10, 70K70}


\dedicatory{}

\begin{abstract}
  In this paper, we derive a variational integrator for certain highly
  oscillatory problems in mechanics.  To do this, we take a new
  approach to the splitting of fast and slow potential forces: rather
  than splitting these forces at the level of the differential
  equations or the Hamiltonian, we split the two potentials with
  respect to the Lagrangian action integral.  By using a different
  quadrature rule to approximate the contribution of each potential to
  the action, we arrive at a geometric integrator that is implicit in
  the fast force and explicit in the slow force.  This can allow for
  significantly longer time steps to be taken (compared to standard
  explicit methods, such as St\"ormer/Verlet) at the cost of only a
  linear solve rather than a full nonlinear solve.  We also analyze
  the stability of this method, in particular proving that it
  eliminates the linear resonance instabilities that can arise with
  explicit multiple-time-stepping methods.  Next, we perform some
  numerical experiments, studying the behavior of this integrator for
  two test problems: a system of coupled linear oscillators, for which
  we compare against the resonance behavior of the r-RESPA method; and
  slow energy exchange in the Fermi--Pasta--Ulam problem, which
  couples fast linear oscillators with slow nonlinear oscillators.
  Finally, we prove that this integrator accurately preserves the slow
  energy exchange between the fast oscillatory components, which
  explains the numerical behavior observed for the Fermi--Pasta--Ulam
  problem.
\end{abstract}

\maketitle

\section{Introduction}

\subsection{Problem Background} Many systems in Lagrangian mechanics
have components acting on different time scales, posing a challenge
for traditional numerical integrators.  Examples include:
\begin{enumerate}
\item {\em Elasticity:} Several spatial elements of varying stiffness,
  resulting from irregular meshes and/or inhomogeneous
  materials~\citep{LeMaOrWe2003}.
\item {\em Planetary Dynamics:} $N$-body problem with nonlinear
  gravitational forces, arising from pairwise inverse-square
  potentials.  Multiple time scales result from the different
  distances between the bodies~\citep{FaBe2007}.
\item {\em Highly Oscillatory Problems:} Potential energy can be split
  into a ``fast'' linear oscillatory component and a ``slow''
  nonlinear component.  These problems are widely encountered in
  modeling molecular dynamics~\citep{LeReSk1996}, but have also
  been used to model other diverse applications, for example, in
  computer animation~\citep{EbEtHa2000,BoAs2004}.
\end{enumerate}
Because these systems each satisfy a Lagrangian variational principle,
they lend themselves readily to {\em variational integrators}: a class
of geometric numerical integrators designed for simulating Lagrangian
mechanical systems.  By construction, variational integrators preserve
a discrete version of this Lagrangian variational structure;
consequently, they are automatically symplectic and
momentum-conserving, with good long-time energy
behavior~\citep{MaWe2001}.

\subsubsection*{Explicit Methods, Multiple Time Stepping, and
  Resonance Instability.}
The St\"ormer/Verlet (or leapfrog) method is one of the canonical
examples of a geometric (and variational) numerical
integrator~\citep[see][]{HaLuWa2003}.  Yet, it and other simple,
explicit time stepping methods do not perform well for problems with
multiple time scales.  The maximum stable time step for these methods
is dictated by the stiffest mode of the underlying system; therefore,
the fastest force dictates the number of evaluations that must be
taken for {\em all} forces, despite the fact that the slow-scale
forces may be (and often are) much more expensive to evaluate.

To reduce the number of costly function evaluations associated to the
slow force, several explicit variational integrators use {\em multiple
  time stepping}, whereby different time step sizes are used to
advance the fast and slow degrees of freedom.  These include
substepping methods, such as Verlet-I/r-RESPA and mollified impulse,
where for each slow time step, an integer number of fast substeps are
taken~\citep{IzMaMaWiSlMoVi2002}.  More recently, asynchronous
variational integrators (AVIs) have been developed, removing the
restriction for fast and slow time steps to be integer (or even
rational) multiples of one another~\citep{LeMaOrWe2003}.
Multiple-time-stepping methods can be more efficient than
single-time-stepping explicit methods, like St\"ormer/Verlet, since
one can fully resolve the fast oscillations while taking many fewer
evaluations of the slow forces.  This is especially advantageous for
highly oscillatory problems, where the slow forces are nonlinear and
hence more computationally expensive to evaluate.

One drawback of multiple-time-stepping methods, however, is that they
can exhibit {\em linear resonance instability}.  This phenomenon
occurs when the slow impulses are nearly synchronized, in phase, with
the the fast oscillations.  These impulses artificially drive the
system at a resonant frequency, causing the energy (and hence the
numerical error) to increase without bound.  The problem of numerical
resonance is well known for substepping methods~\citep{BiSk1993}, and
has also recently been shown for AVIs as well---in fact, the subset of
fast and slow time step size pairs leading to resonance instability is
{\em dense} in the space of all possible
parameters~\citep{FoDaLe2007}.  Resonance instability can therefore be
difficult to avoid, particularly in highly oscillatory systems with
many degrees of freedom, as in molecular dynamics applications.

\subsubsection*{Implicit Methods for Single Time Stepping with Longer
  Step Sizes} Because multiple-time-stepping methods have these
resonance problems, a number of single-time-stepping methods have been
developed specifically for highly oscillatory problems.  As noted
earlier, single-time-stepping methods cannot fully resolve the fast
oscillations without serious losses in efficiency.  Therefore, the
goal of these methods is to take long time steps, {\em without}
actually resolving the fast oscillations, while still accurately
capturing the macroscopic behavior that emerges from the coupling
between fast and slow scales.  The challenge is to design methods that
allow for these longer time steps, without destroying either numerical
stability or geometric structure.

One obvious candidate integrator is the implicit midpoint method,
which is (linearly) unconditionally stable, as well as variational
(hence symplectic) and symmetric. Unfortunately, the stability of the
method comes at a cost: because the integrator is implicit in the slow
force, which is generally nonlinear, a nonlinear system of equations
must be solved at every time step. Therefore, just like the
fully-resolved St\"ormer/Verlet method, this means that the implicit
midpoint method requires an excessive number of function evaluations.

\subsubsection*{Implicit-Explicit Integration}
For highly oscillatory problems, {\em implicit-explicit} (IMEX)
integrators have been proposed as a potentially attractive alternative
to either explicit, multiple-time-stepping methods or implicit,
single-time-stepping methods.  Rather than using separate fast and
slow time step sizes, IMEX methods combine implicit integration (e.g.,
backward Euler) for the fast force with explicit integration (e.g.,
forward Euler) for the slow force.  Because the fast force is linear,
this semi-implicit approach requires only a linear solve for the
implicit portion, as opposed to the expensive nonlinear solve that
would be required for a fully implicit integrator, like the implicit
midpoint method.

IMEX methods were developed by~\citet{Crouzeix1980}, and have
continued to progress, including the introduction of IMEX Runge--Kutta
schemes for PDEs by~\citet{AsRuSp1997}.  However, in all of these
methods, the splitting is done at the level of the Euler--Lagrange
differential equations, rather than at the variational level of the
Lagrangian.  Consequently, a wide variety of IMEX schemes have been
created, both geometric and non-geometric, but in general they cannot
guarantee properties such as symplecticity, momentum conservation, or
good long-time energy behavior, which automatically hold for
variational integrators.  As an example of an IMEX integrator that is
not ``geometric'' in the usual sense, consider the LI and LIN methods
of~\citet{ZhSc1993}, which combine the backward Euler method with
explicit Langevin dynamics for molecular systems.  In particular, to
ameliorate the artificial numerical dissipation introduced by using
backward Euler, these methods rely on stochastic forcing to inject the
missing energy back into the system.

In this paper, we develop IMEX numerical integration from a
Lagrangian, variational point of view.  We do this by splitting the
fast and slow potentials at the level of the Lagrangian action
integral, rather than with respect to the differential equations or
the Hamiltonian. From this viewpoint, implicit-explicit integration is
an automatic consequence of discretizing the action integral using two
distinct quadrature rules for the slow and fast potentials.  The
resulting discrete Euler--Lagrange equations coincide with a
semi-implicit algorithm that was originally introduced
by~\citet{ZhSk1997} as a ``cheaper'' alternative to the implicit
midpoint method; \citet{AsRe1999a} also studied a variant of this
method for certain problems in molecular dynamics, replacing the
implicit midpoint step by the energy-conserving (but non-symplectic)
Simo--Gonzales method.

We also show that this variational IMEX method is free of resonance
instabilities; the proof of this fact is naturally developed at the
level of the Lagrangian, and does not require an examination of the
associated Euler--Lagrange equations. We then compare the
resonance-free behavior of variational IMEX to the
multiple-time-stepping method r-RESPA in a numerical simulation of
coupled slow and fast oscillators. Next, we evaluate the stability of
the variational IMEX method, for large time steps, in a computation of
slow energy exchange in the Fermi--Pasta--Ulam problem.  Finally, we
prove that the variational IMEX method accurately preserves this slow
energy exchange behavior (as observed in the numerical experiments) by
showing that it corresponds to a modified impulse method.

\subsection{A Brief Review of Variational Integrators}
The idea of variational integrators was studied by~\citet{Suris1990}
and \citet{MoVe1991}, among others, and a general theory was developed
over the subsequent decade~\citep[see][for a comprehensive
survey]{MaWe2001}.

Suppose we have a mechanical system on a configuration manifold $Q$,
specified by a Lagrangian $ L \colon TQ \rightarrow \mathbb{R} $.
Given a set of discrete time points $ t _0 < \cdots < t _N $ with
uniform step size $h$, we wish to compute a numerical approximation $
q _n \approx q \left( t _n \right),\ n = 0, \ldots , N $, to the
continuous trajectory $ q (t) $.  To construct a variational
integrator for this problem, we define a {\em discrete Lagrangian} $ L
_h \colon Q \times Q \rightarrow \mathbb{R} $, replacing tangent
vectors by pairs of consecutive configuration points, so that with
respect to some interpolation method and numerical quadrature rule we
have
\begin{equation*}
  L _h \left( q _n , q _{ n + 1 } \right) \approx \int_{ t _n} ^{ t _{ n +
      1 } } L \left( q, \dot{q} \right) \,dt .
\end{equation*}
Then the action integral over the whole time interval is approximated
by the {\em discrete action sum}
\begin{equation*}
  S _h [q] = \sum _{n=0}^{N-1}  L _h \left( q _n , q _{ n + 1 } \right) \approx
  \int_{ t _0} ^{ t _N } L \left( q, \dot{q} \right) \,dt .
\end{equation*}

If we apply Hamilton's principle to this action sum, so that $ \delta
S _h [q] = 0 $ when variations are taken over paths with fixed
endpoints, then this yields the {\em discrete Euler--Lagrange
  equations}
\begin{equation*}
  D _1 L _h \left( q _n , q _{ n + 1 } \right) + D _2 L _h \left( q _{ n - 1 } , q
    _n \right) = 0 , \qquad n = 1, \ldots, N - 1 ,
\end{equation*}
where $ D _1 $ and $ D _2 $ denote partial differentiation in the
first and second arguments, respectively.  This defines a two-step
numerical method on $ Q \times Q $, mapping $ \left( q _{ n - 1 } , q
  _n \right) \mapsto \left( q _n , q _{ n + 1 } \right) $.  The
equivalent one-step method on the cotangent bundle $T^\ast Q$, mapping
$ \left( q _n , p _n \right) \mapsto \left( q _{ n + 1 } , p _{ n + 1
  } \right) $, is defined by the {\em discrete Legendre transform}
\begin{equation*}
p _n = - D _1 L _h \left( q _n , q _{ n + 1 } \right) , \qquad p _{ n
  + 1 } = D _2 L _h \left( q _n , q _{ n + 1 } \right) ,
\end{equation*}
where the first equation updates $q$, and the second updates $p$.

\subsubsection*{Examples.}  Consider a Lagrangian of the form $ L
\left( q , \dot{q} \right) = \frac{1}{2} \dot{q} ^T M \dot{q} - V (q)
$, where $ Q = \mathbb{R}^{d} $, $M$ is a constant $ {d} \times {d} $
mass matrix, and $V \colon Q \rightarrow \mathbb{R} $ is a potential.
If we use linear interpolation of $q$ with trapezoidal quadrature to
approximate the contribution of $V$ to the action integral, we get
\begin{equation*}
  L _h ^\text{trap} \left( q _n , q _{ n + 1 } \right) = \frac{ h }{ 2 } \left(
    \frac{ q _{ n + 1 } - q _n }{ h } \right) ^T M \left(
    \frac{ q _{ n + 1 } - q _n }{ h } \right) - h \frac{ V
    \left( q _n \right) + V \left( q _{ n + 1 } \right) }{ 2 }  ,
\end{equation*}
which we call the {\em trapezoidal discrete Lagrangian}.  It is
straightforward to see that the discrete Euler--Lagrange equations for
$ L _h ^\text{trap} $ correspond to the explicit St\"ormer/Verlet
method.  Alternatively, if we use midpoint quadrature to approximate
the integral of the potential, this yields the {\em midpoint discrete
  Lagrangian},
\begin{equation*}
  L _h ^\text{mid} \left( q _n , q _{ n + 1 } \right) = \frac{ h }{ 2 } \left(
    \frac{ q _{ n + 1 } - q _n }{ h } \right) ^T M \left(
    \frac{ q _{ n + 1 } - q _n }{ h } \right) - h V  \left( \frac{ q
      _n +  q _{ n + 1 } }{ 2 } \right)  ,
\end{equation*}
for which the resulting integrator is the implicit midpoint method.

\section{A Variational IMEX Method}

In this section, we show how to develop a variational integrator that
combines aspects of the St\"ormer/Verlet and implicit midpoint methods
mentioned above.  The main idea is that, given a splitting of the
potential energy into fast and slow components, we define the discrete
Lagrangian by applying the midpoint quadrature rule to the fast
potential and the trapezoidal quadrature rule to the slow potential.
The resulting variational integrator is implicit in the fast force and
explicit in the slow force.  After this, we focus on the specific case
of highly oscillatory problems, where the fast potential is quadratic
(corresponding to a linear fast force).  In this case, we show that
the IMEX integrator can be understood as St\"ormer/Verlet with a
modified mass matrix.

It should be noted that the results in this section can also be
verified directly in terms of the numerical algorithm, and do not
strictly require making use of the Lagrangian variational structure.
However, we find the variational perspective to be useful and
illustrative, both in arriving at this particular IMEX algorithm and
in interpreting its numerical features.

\subsection{The IMEX Discrete Lagrangian and Equations of Motion}
Suppose that we have a Lagrangian of the form $ L \left( q, \dot{q}
\right) = \frac{1}{2} \dot{q} ^T M \dot{q} - U (q) - W (q) $, where
$U$ is a slow potential and $W$ is a fast potential, for the
configuration space $ Q = \mathbb{R}^{d} $.  Then define the {\em IMEX
  discrete Lagrangian}
\begin{equation*}
  L _h ^\text{IMEX} \left( q _n , q _{ n + 1 } \right) = \frac{ h }{ 2 } \left(
    \frac{ q _{ n + 1 } - q _n }{ h } \right) ^T M \left(
    \frac{ q _{ n + 1 } - q _n }{ h } \right) - h \frac{ U
    \left( q _n \right) + U \left( q _{ n + 1 } \right) }{ 2 } - h W
  \left( \frac{ q _n +  q _{ n + 1 } }{ 2 } \right)  ,
\end{equation*}
using (explicit) trapezoidal approximation for the slow potential and
(implicit) midpoint approximation for the fast potential.  The
discrete Euler--Lagrange equations give the two-step variational
integrator on $ Q \times Q $
\begin{equation*}
q _{ n + 1 } - 2 q _n + q _{ n - 1 } = - h ^2 M ^{-1} \left[ \nabla U
  \left( q _n \right) + \frac{1}{2} \nabla W \left( \frac{ q _{ n - 1
      } + q _n }{ 2 } \right) + \frac{1}{2} \nabla W \left( \frac{ q
      _n + q _{ n + 1 } }{ 2 } \right) \right] ,
\end{equation*}
and the corresponding discrete Legendre transform is given by
\begin{align*}
  p _n &= M \left( \frac{ q _{ n + 1 } - q _n }{ h } \right) + \frac{
    h }{ 2 } \nabla U \left( q _n \right) + \frac{ h }{ 2 } \nabla W
  \left( \frac{ q _n + q _{ n + 1 } }{ 2 } \right), \\
  p _{n+1} &= M \left( \frac{ q _{ n + 1 } - q _n }{ h } \right) - \frac{
    h }{ 2 } \nabla U \left( q _{n+1} \right) - \frac{ h }{ 2 } \nabla
  W \left( \frac{ q _n + q _{ n + 1 } }{ 2 } \right).
\end{align*}

To see how this translates into an algorithm for a one-step integrator
on $T^\ast Q$, it is helpful to introduce the intermediate stages
\begin{equation*}
  p _n ^+ = p _n - \frac{ h }{ 2 } \nabla U \left( q _n \right) ,
  \qquad  p ^- _{n+1} = p _{n+1} + \frac{ h }{ 2 } \nabla U \left( q
    _{n+1} \right).
\end{equation*}
Substituting these into the previous expression and rearranging yields
the algorithm
\begin{alignat*}{2}
  \textbf{Step 1:}&& p _n ^+ &= p _n - \frac{ h }{ 2 } \nabla U \left(
    q _n \right), \\
  \textbf{Step 2:} &\quad& \left\{ \hspace{-10.5em} \begin{aligned}
      \phantom{q _n + h M ^{-1} \left( \frac{ p _n ^+ + p _{ n + 1
            } ^- }{ 2 } \right)} q _{ n + 1 } &\\
      \phantom{p _n ^+ - h \nabla W \left( \frac{ q _n + q _{ n + 1
            }}{2} \right)} p _{ n + 1 } ^- &
  \end{aligned} \right. 
&\!
\begin{aligned}
&= q _n + h M ^{-1} \left( \frac{ p _n ^+ + p _{ n + 1
      } ^- }{ 2 } \right) ,\\
 &= p _n ^+ - h \nabla W \left( \frac{ q _n + q _{ n
        + 1 }}{2} \right) ,
\end{aligned}
\\
\textbf{Step 3:}&& p _{n+1} &= p _{n+1}^- - \frac{ h }{ 2 } \nabla U \left( q _{n+1}
\right) ,
\end{alignat*}
where Step 2 corresponds to a step of the implicit midpoint method.

This can be summarized, in the style of impulse methods, as:
\begin{enumerate}
\item {\bf kick:} explicit kick from $U$ advances $ \left( q _n , p _n
  \right) \mapsto \left( q _n , p _n ^+ \right) $,
\item {\bf oscillate:} implicit midpoint method with $W$ advances $
  \left( q _n, p _n ^+ \right) \mapsto \left( q _{ n + 1 } , p _{ n +
      1 } ^- \right) $,
\item {\bf kick:} explicit kick from $U$ advances $ \left( q _{n+1} ,
    p _{n+1}^- \right) \mapsto \left( q _{n+1} , p _{n+1} \right) $.
\end{enumerate}
In particular, notice that this reduces to the St\"ormer/Verlet method
when $ \nabla W \equiv 0 $ and to the implicit midpoint method when $
\nabla U \equiv 0 $.  Also, if the momentum $ p _n $ does not actually
need to be recorded at the full time step (i.e., collocated with the
position $ q _n $), then Step 3 can be combined with Step 1 of the
next iteration to create a staggered ``leapfrog'' method.

\subsubsection*{Interpretation as a Hamiltonian Splitting Method}
This algorithm on $ T ^\ast Q $ can also be interpreted as a fast-slow
splitting method~\citep[II.5 and VIII.4.1]{McQu2002,HaLuWa2006} for
the separable Hamiltonian $ H = T + U + W $, where $T$ is the kinetic
energy, as follows.  Let $ \Phi ^{ T+W } _h \colon T^\ast Q
\rightarrow T^\ast Q $ denote the numerical flow of the implicit
midpoint method with time step size $h$, applied to the fast portion
of the Hamiltonian $ T + W $, and let $ \varphi ^U _h \colon T^\ast Q
\rightarrow T^\ast Q $ be the exact Hamiltonian flow for the slow
potential $U$ (i.e., constant acceleration without displacement).
Then the variational IMEX method has the flow map $ \Psi _h \colon
T^\ast Q \rightarrow T^\ast Q $, which can be written as the following
composition of exact and numerical flows:
\begin{equation*}
  \Psi _h = \varphi  _{ h/2 } ^U  \circ \Phi _h ^{ T+W } \circ
  \varphi  ^U  _{ h/2 } .
\end{equation*}
This formulation highlights the fact that variational IMEX is
symmetric (since it is a symmetric composition of symmetric methods)
as well as symplectic (since it can be written as a composition of
symplectic maps).

\subsection{Application to Highly Oscillatory Problems} For highly
oscillatory problems on $ Q = \mathbb{R}^{d} $, we start by taking a
quadratic fast potential
\begin{equation*}
  W (q) = \frac{1}{2} q ^T \Omega ^2 q , \qquad \Omega \in \mathbb{R}
  ^{ {d} \times {d} } \text{ symmetric and positive semidefinite}.
\end{equation*}
A prototypical $\Omega$ is given by the block-diagonal matrix $ \Omega
= \left( \begin{smallmatrix}
    0 & 0 \\
    0 & \omega I
  \end{smallmatrix} \right) $, where some of the degrees of freedom
are subjected to an oscillatory force with constant fast frequency $
\omega \gg 1 $.  We also denote the slow force $ g (q) = - \nabla U
(q) $ and assume, without loss of generality, that the constant mass
matrix is given by $ M = I $.  Therefore, the nonlinear system we wish
to approximate numerically is
\begin{equation*}
\ddot{q} + \Omega ^2 q = g (q) .
\end{equation*}
This is the conventional setup for highly oscillatory problems, used
by~\citet[XIII]{HaLuWa2006} and others.

Applying the IMEX method to this example, we get the discrete
Lagrangian
\begin{align*} 
  L _h ^\text{IMEX} \left( q _n , q _{ n + 1 } \right) &= \frac{ h }{
    2 } \left( \frac{ q _{ n + 1 } - q _n }{ h } \right) ^T \left(
    \frac{ q _{
        n + 1 } - q _n }{ h } \right)\\
  &\quad - h \frac{ U \left( q _n \right) + U \left( q _{ n + 1 }
    \right) }{ 2 } - h \left( \frac{ q _n + q _{ n + 1 } }{ 2 }
  \right) ^T \Omega ^2 \left( \frac{ q _n + q _{ n + 1 } }{ 2 }
  \right),
\end{align*} 
and so the two-step IMEX scheme is given by the discrete
Euler--Lagrange equations
\begin{equation*}
  q _{ n + 1 } - 2 q _n + q _{ n - 1 } + \frac{ h ^2 }{ 4 } \Omega ^2
  \left( q _{ n + 1 } + 2 q _n + q _{ n - 1 } \right) = h ^2 g \left(
    q _n \right) .
\end{equation*}
Combining terms, we can rewrite this as
\begin{equation*}
  \left[ I + \frac{ h ^2 }{ 4 } \Omega ^2 \right] \left( q _{ n + 1 } -
    2 q _n + q _{ n - 1 } \right) + h ^2 \Omega ^2 q _n = h ^2 g \left(
    q _n \right) ,
\end{equation*}
which is equivalent to St\"ormer/Verlet with a modified mass matrix $
I + (h \Omega / 2) ^2 $.  This equivalence can similarly be shown to
hold for the one-step formulation of the IMEX scheme on $T^\ast
Q$---that is, the two methods also produce the same $ p _n $, as well
as the same $ q _n $.

In fact, this correspondence between IMEX and a modified
St\"ormer/Verlet method is true not just in the discrete
Euler--Lagrange equations, but in the discrete Lagrangian itself.  This
follows immediately from the following proposition.
\label{sec:imex-equiv-sv}
\begin{proposition}
  Suppose we have a Lagrangian $ L \left( q , \dot{q} \right) =
  \frac{1}{2} \dot{q} ^T M \dot{q} - \frac{1}{2} q ^T \Omega ^2 q $
  and its corresponding midpoint discrete Lagrangian $ L ^\text{mid}
  _h $.  Next, define the modified Lagrangian $ \tilde{L} \left( q ,
    \dot{q} \right) = \frac{1}{2} \dot{q} ^T \tilde{M} \dot{q} -
  \frac{1}{2} q ^T \Omega ^2 q $, having the same quadratic potential
  but a different mass matrix $ \tilde{ M } $, and take its
  trapezoidal discrete Lagrangian $ \tilde{L} ^\text{trap} _h $.  Then
  $ L ^\text{mid} _h \equiv \tilde{L} ^\text{trap} _h $ when $ \tilde{
    M } = M + \left( h \Omega / 2 \right) ^2 $.
\end{proposition}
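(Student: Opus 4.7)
The plan is to prove equality of the two discrete Lagrangians by a direct algebraic computation: subtract them, observe that the difference is a quadratic form purely in the displacement $q_{n+1}-q_n$, and then solve for the mass correction $\tilde{M}-M$ that makes this difference vanish.

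First I would write both discrete Lagrangians out explicitly. For $L_h^{\text{mid}}$, the midpoint rule applied to $V(q)=\tfrac12 q^T\Omega^2 q$ contributes $-\tfrac{h}{8}(q_n+q_{n+1})^T\Omega^2(q_n+q_{n+1})$, while for $\tilde L_h^{\text{trap}}$ the trapezoidal rule contributes $-\tfrac{h}{4}\bigl(q_n^T\Omega^2 q_n + q_{n+1}^T\Omega^2 q_{n+1}\bigr)$. The kinetic pieces are $\tfrac{1}{2h}(q_{n+1}-q_n)^T M(q_{n+1}-q_n)$ and $\tfrac{1}{2h}(q_{n+1}-q_n)^T \tilde M(q_{n+1}-q_n)$ respectively.

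Next I would compute the ``trapezoidal minus midpoint'' defect for the quadratic potential. Expanding $(q_n+q_{n+1})^T\Omega^2(q_n+q_{n+1}) = q_n^T\Omega^2 q_n + 2 q_n^T\Omega^2 q_{n+1} + q_{n+1}^T\Omega^2 q_{n+1}$ and combining with $\tfrac{h}{4}(q_n^T\Omega^2 q_n + q_{n+1}^T\Omega^2 q_{n+1})$, the cross terms cancel and one is left with the clean identity
\begin{equation*}
\tfrac{h}{4}\bigl(q_n^T\Omega^2 q_n + q_{n+1}^T\Omega^2 q_{n+1}\bigr) - \tfrac{h}{8}(q_n+q_{n+1})^T\Omega^2(q_n+q_{n+1}) = \tfrac{h}{8}(q_{n+1}-q_n)^T\Omega^2(q_{n+1}-q_n).
\end{equation*}
This is the crux: the ``extra'' potential energy picked up by the trapezoidal rule, compared with the midpoint rule, is itself a quadratic form in the displacement, and hence looks like a kinetic term.

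Finally I would match coefficients. Setting $L_h^{\text{mid}}=\tilde L_h^{\text{trap}}$ and using the identity above reduces the equality to
\begin{equation*}
\tfrac{1}{2h}(q_{n+1}-q_n)^T(\tilde M - M)(q_{n+1}-q_n) = \tfrac{h}{8}(q_{n+1}-q_n)^T\Omega^2(q_{n+1}-q_n)
\end{equation*}
for all pairs $(q_n,q_{n+1})$. Since this must hold for every displacement, and both sides are quadratic forms with symmetric matrices, one reads off $\tilde M - M = \tfrac{h^2}{4}\Omega^2 = (h\Omega/2)^2$, giving exactly the stated condition. There is no real obstacle here; the only mild care required is in the bookkeeping of the factors of $\tfrac12$ coming from the definition of $V$ and from the trapezoid/midpoint weights, and in noting that the identification of $\tilde M$ is legitimate because $(q_{n+1}-q_n)$ ranges over all of $\mathbb{R}^d$ as $(q_n,q_{n+1})$ varies.
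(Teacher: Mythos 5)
Your proof is correct and rests on exactly the same algebraic fact as the paper's: the parallelogram-type identity showing that the midpoint potential equals the trapezoidal potential minus a quadratic form in the displacement $q_{n+1}-q_n$, which is then absorbed into the mass matrix as $(h\Omega/2)^2$. The only difference is presentational (you subtract the two discrete Lagrangians and match coefficients, whereas the paper rewrites $L_h^{\text{mid}}$ directly into trapezoidal form), so this is essentially the paper's argument.
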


\begin{proof}
The midpoint discrete Lagrangian is given by
\begin{equation*} 
  L _h ^\text{mid} \left( q _n , q _{ n + 1 } \right) = \frac{h}{2} \left(
    \frac{ q _{ n + 1 } - q _n }{ h } \right) ^T M \left( \frac{ q
      _{ n + 1 } - q _n }{ h } \right) - \frac{ h }{ 2 } \left( \frac{
      q _n + q _{ n  + 1 } }{ 2 } \right) ^T \Omega ^2 \left( \frac{ q
      _n + q _{ n + 1 } }{ 2 } \right).
\end{equation*} 
Now, notice that we can rearrange the terms
\begin{align*}
  - \left( \frac{ q _n + q _{ n + 1 } }{ 2 } \right) ^T \Omega ^2
  \left( \frac{ q _n + q _{ n + 1 } }{ 2 } \right) &= \left( \frac{ q
      _{ n + 1 } - q _n }{ 2 } \right) ^T \Omega ^2 \left( \frac{ q _{
        n + 1 } - q _n }{ 2 } \right) - \frac{1}{2} q _n ^T \Omega ^2
  q _n - \frac{1}{2} q _{ n + 1 } ^T \Omega ^2 q _{ n + 1 } \\
  &= \left( \frac{ q _{ n + 1 } - q _n }{ h } \right) ^T \left( \frac{
      h \Omega }{ 2 } \right) ^2 \left( \frac{ q _{ n + 1 } - q _n }{
      h } \right) - \frac{1}{2} q _n ^T \Omega ^2 q _n - \frac{1}{2} q
  _{ n + 1 } ^T \Omega ^2 q _{ n + 1 } .
\end{align*}
Therefore the discrete Lagrangian can be written in the trapezoidal
form
\begin{equation*} 
  L _h ^\text{mid} \left( q _n , q _{ n + 1 } \right) = \frac{h}{2}
  \left( \frac{ q _{ n + 1 } - q _n }{ h } \right) ^T \left[ M +
    \left( \frac{ h \Omega }{ 2 } \right) ^2 \right] \left( \frac{ q
      _{ n + 1 } - q _n }{ h } \right) - \frac{ h }{ 2 } \left(
    \frac{1}{2} q _n ^T \Omega ^2 q _n + \frac{1}{2} q _{ n + 1 } ^T
    \Omega ^2 q _{ n + 1 } \right) ,
\end{equation*} 
which is precisely $ \tilde{L} _h ^\text{trap} \left( q _n , q _{ n +
    1 } \right) $ when $ \tilde{ M } = M + \left( h \Omega / 2 \right)
^2 $.
\end{proof}

\begin{corollary}
  Consider a highly oscillatory system with an arbitrary slow
  potential $U$, quadratic fast potential $ W (q) = \frac{1}{2} q ^T
  \Omega ^2 q $, and constant mass matrix $ M = I $, so that the
  Lagrangian $L$ and IMEX discrete Lagrangian $ L _h ^\text{IMEX} $
  are defined as above.  Next, take the modified Lagrangian $ \tilde{
    L } $ with the same potentials but different mass matrix $ \tilde{
    M } $.  Then $ L _h ^\text{IMEX} \equiv \tilde{ L } _h
  ^\text{trap} $ when $ \tilde{M} = I + \left( h \Omega / 2 \right) ^2
  $.
\end{corollary}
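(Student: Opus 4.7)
The plan is to decompose the IMEX discrete Lagrangian into a part that only involves the kinetic energy and the fast quadratic potential, plus a part that only involves the slow potential, and then invoke the preceding proposition directly on the first part.

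Concretely, I would begin by writing
\begin{equation*}
L_h^\text{IMEX}(q_n,q_{n+1}) = \underbrace{\frac{h}{2}\left(\frac{q_{n+1}-q_n}{h}\right)^T I \left(\frac{q_{n+1}-q_n}{h}\right) - h\left(\frac{q_n+q_{n+1}}{2}\right)^T \Omega^2 \left(\frac{q_n+q_{n+1}}{2}\right)}_{(\ast)} \; -\; h\,\frac{U(q_n)+U(q_{n+1})}{2}.
\end{equation*}
The term $(\ast)$ is, by inspection, precisely the midpoint discrete Lagrangian $L_h^\text{mid}$ for the purely quadratic Lagrangian $L(q,\dot q) = \tfrac{1}{2}\dot q^T \dot q - \tfrac{1}{2}q^T\Omega^2 q$ with mass matrix $M = I$ (up to the factor of $\tfrac{1}{2}$ in the quadratic term, which I would double-check; note that the midpoint quadrature of $W(q) = \tfrac{1}{2}q^T\Omega^2 q$ yields exactly the expression shown above).

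Next, I would apply Proposition~\ref{sec:imex-equiv-sv} with $M=I$ to replace $(\ast)$ by the trapezoidal discrete Lagrangian of the modified free Lagrangian $\tilde L(q,\dot q) = \tfrac{1}{2}\dot q^T \tilde M \dot q - \tfrac{1}{2}q^T\Omega^2 q$, where $\tilde M = I + (h\Omega/2)^2$. That is,
\begin{equation*}
(\ast) \;=\; \frac{h}{2}\left(\frac{q_{n+1}-q_n}{h}\right)^T \tilde M \left(\frac{q_{n+1}-q_n}{h}\right) \;-\; h\,\frac{W(q_n)+W(q_{n+1})}{2}.
\end{equation*}
Reassembling with the untouched trapezoidal $U$-term gives exactly the trapezoidal discrete Lagrangian of the full modified Lagrangian $\tilde L(q,\dot q) = \tfrac{1}{2}\dot q^T \tilde M \dot q - U(q) - W(q)$, which is what the corollary claims.

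There is essentially no obstacle here: the whole argument is to recognize that the trapezoidal-on-$U$ piece of $L_h^\text{IMEX}$ is identical to the trapezoidal-on-$U$ piece of $\tilde L_h^\text{trap}$, so the corollary reduces to applying the preceding proposition to the kinetic-plus-quadratic part. The only real care is to verify that $M=I$ in the proposition produces $\tilde M = I + (h\Omega/2)^2$, which is immediate.
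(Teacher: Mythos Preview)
Your approach is correct and is exactly how the paper intends the corollary to follow from the proposition: the paper gives no separate proof, treating it as immediate, and your decomposition into the kinetic-plus-quadratic part (to which the proposition applies verbatim with $M=I$) plus the untouched trapezoidal $U$-term is the natural way to spell this out. The only caveat is the factor you already flagged: the midpoint quadrature of $W(q)=\tfrac{1}{2}q^T\Omega^2 q$ gives $-\tfrac{h}{2}\bigl(\tfrac{q_n+q_{n+1}}{2}\bigr)^T\Omega^2\bigl(\tfrac{q_n+q_{n+1}}{2}\bigr)$, so $(\ast)$ should carry that extra $\tfrac{1}{2}$ to match the proposition's $L_h^{\text{mid}}$ exactly.
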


\subsection{Analysis of Linear Resonance Stability}
\label{sec:resonance-stability}
To study the linear resonance stability of this IMEX integrator, we
consider a model problem where $U$ and $W$ both correspond to linear
oscillators.  Let $ U (q) = \frac{1}{2} q ^T q $ and $ W (q) =
\frac{1}{2} q ^T \Omega ^2 q $, where $ \Omega = \omega I $ for some $
\omega \gg 1 $, and again let the mass matrix $ M = I $.  Although
this is something of a ``toy problem''---obviously, one could simply
combine $U$ and $W$ into a single quadratic potential $ \frac{1}{2}
\left( 1 + \omega ^2 \right) q ^T q $---it is illustrative for
studying the numerical resonance of multiple-time-stepping methods,
since the system has no external forcing terms and hence no real {\em
  physical} resonance.

To prove that the IMEX method does not exhibit linear resonance
instability, we show that the stability condition only requires that
the time step be stable for the explicit slow force, and is
independent of the fast frequency $\omega$.  The idea of the proof is
to use the results from~\autoref{sec:imex-equiv-sv}, showing that
the IMEX method is equivalent to St\"ormer/Verlet with a modified mass
matrix, and then to apply the well-known stability criteria for
St\"ormer/Verlet.

In particular, for a harmonic oscillator with unit mass and frequency
$\nu$, the St\"ormer/Verlet method is linearly stable if and only if $
\left\lvert h \nu \right\rvert \leq 2 $, as can be shown by a
straightforward calculation of the eigenvalues of the propagation
matrix~\citep[p.~23]{HaLuWa2006}.  For a system with constant mass $m$
and spring constant $ \nu ^2 $, this condition generalizes to $ h ^2
\nu ^2 \leq 4 m $.

\begin{theorem}
\label{thm:imex-no-resonance}
  The IMEX method is linearly stable, for the system described above,
  if and only if $ h \leq 2 $ (i.e., if and only if $h$ is a stable
  time step size for the slow oscillator alone).
\end{theorem}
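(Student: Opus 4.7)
The plan is to leverage the equivalence established in the previous proposition (and its corollary) so that the IMEX scheme is recast as St\"ormer/Verlet applied to a \emph{single} harmonic oscillator with modified mass and modified spring constant, after which the elementary St\"ormer/Verlet stability condition can be invoked directly.

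First, I would appeal to the corollary to rewrite $L_h^\text{IMEX}$ as the trapezoidal discrete Lagrangian $\tilde{L}_h^\text{trap}$ with modified mass matrix $\tilde{M} = I + (h\Omega/2)^2$ and with the same slow and fast potentials $U$ and $W$. Since for this model problem both $U$ and $W$ are quadratic and isotropic, the trapezoidal treatment collapses them into a single quadratic potential $\tfrac{1}{2}(1+\omega^2) q^T q$. Thus the two-step IMEX integrator coincides exactly with St\"ormer/Verlet applied to a harmonic oscillator on $\mathbb{R}^d$ with mass $\tilde{m} = 1 + (h\omega/2)^2$ and spring constant $\nu^2 = 1 + \omega^2$; because $\Omega = \omega I$ everything decouples across the $d$ coordinates, so stability for the multivariate system reduces to stability for a single scalar harmonic oscillator with these parameters.

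Next I would invoke the standard St\"ormer/Verlet stability criterion quoted just before the theorem, namely $h^2 \nu^2 \leq 4 \tilde{m}$. Substituting the effective parameters gives the inequality $h^2(1+\omega^2) \leq 4 + h^2 \omega^2$, in which the $h^2 \omega^2$ terms cancel on both sides and leave the purely $\omega$-independent condition $h^2 \leq 4$, i.e.\ $h \leq 2$. This is exactly the stability bound for the slow oscillator alone, establishing both directions of the ``if and only if.''

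There is no real obstacle here: the cancellation of the $\omega$-dependent contributions is the entire content of the proof, and it is precisely this cancellation that the modified-mass reformulation makes transparent. The only small care needed is to note that the modified mass $\tilde{M}$ is positive definite (which is automatic since $\Omega$ is symmetric and $(h\Omega/2)^2$ is positive semidefinite added to $I$), so that the St\"ormer/Verlet stability formula applies in its stated form and the ``only if'' direction is not spoiled by any degeneracy in the kinetic energy.
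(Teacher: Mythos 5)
Your proposal is correct and follows essentially the same route as the paper's proof: invoke the equivalence of IMEX with St\"ormer/Verlet under the modified mass matrix $\tilde{M} = I + (h\Omega/2)^2$, apply the stability criterion $h^2\nu^2 \leq 4m$ with $m = 1 + (h\omega/2)^2$ and $\nu^2 = 1+\omega^2$, and observe that the $h^2\omega^2$ terms cancel to leave $h^2 \leq 4$. The extra remarks on coordinate decoupling and positive definiteness of $\tilde{M}$ are sound but not needed beyond what the paper already assumes.
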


\begin{proof}
  As proved in the previous section, the IMEX method for this system
  is equivalent to St\"ormer/Verlet with the modified mass matrix $
  I + (h \Omega /2 )^2 $.  Now, this modified oscillatory system has
  constant mass $ m = 1 + \left( h \omega / 2 \right) ^2 $ and spring
  constant $ \nu ^2 = 1 + \omega ^2 $.  Therefore, the necessary and
  sufficient condition for linear stability is
  \begin{equation*}
    h ^2 \left( 1 + \omega ^2 \right) \leq 4 \left( 1 +
      \frac{ h ^2 }{ 4 } \omega ^2 \right) ,
\end{equation*}
and since the $ h ^2 \omega ^2 $ terms cancel on both sides, this is
equivalent to $ h ^2 \leq 4 $, or $ h \leq 2 $.
\end{proof}
This shows that, in contrast to multiple-time-stepping methods, the
IMEX method does not exhibit linear resonance instability.  In
particular, one can interpret the modified mass matrix as giving the
system an effective frequency of $ \sqrt{ \left( 1 + \omega ^2 \right)
  \bigl/ \left( 1 + \left( h \omega / 2 \right) ^2 \right) }$, which
attenuates the destabilizing high frequencies in the original system.
It should be noted that {\em nonlinear} instability is known to be
possible for the implicit midpoint method, although even that can be
avoided with a time step size restriction that is considerably weaker
than that required for explicit methods~\citep[see][]{AsRe1999}.

\section{Numerical Experiments}

\subsection{Coupled Linear Oscillators} To illustrate the numerical
resonance behavior of the variational IMEX scheme, as compared with a
multiple-time-stepping method, we consider the model problem
from~\autoref{sec:resonance-stability} for dimension $ {d} = 1 $
(i.e., $ Q = \mathbb{R} $).  \autoref{fig:resonance} shows a log plot
of the maximum absolute error in total energy (i.e., the Hamiltonian)
for both r-RESPA and the variational IMEX method, for a range of
frequencies $\omega$.  MATLAB simulations were performed over the time
interval $ [0,1000] $, with fixed time step size $ h = 0.1 $, and with
the normalized frequency $ \omega h / \pi $ ranging over $ ( 0, 4.5 ]
$.  Additionally, to fully resolve the fast oscillations, r-RESPA took
100 fast substeps of size $ h/100 = 0.001 $ for each full time step of
size $h$.

\begin{figure}
\includegraphics[width=.6\linewidth]{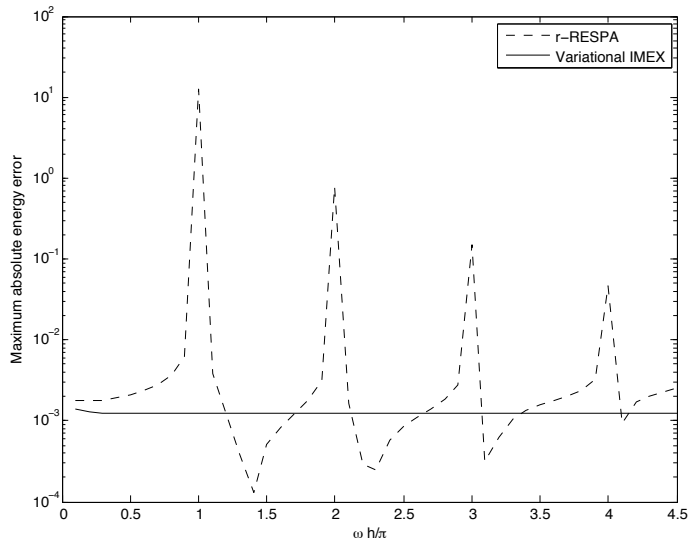}
\caption{Maximum energy error of r-RESPA and variational IMEX,
  integrated over the time interval $ [0,1000] $ for a range of
  parameters $\omega$.  The r-RESPA method exhibits resonance
  instability near integer values of $ \omega h / \pi $, while the
  variational IMEX method remains stable.}
\label{fig:resonance}
\end{figure}

The r-RESPA method exhibits ``spikes'' in the total energy error near
integer values of $ \omega h / \pi $, corresponding to the parameters
where resonance instability develops and the numerical solution
becomes unbounded.  (The finite size of these spikes is due to the
fact that the numerical simulation was run only for a finite interval
of time.  Interestingly, one also sees ``negative spikes,'' where the
fast and slow oscillations are exactly out-of-phase and cancel one
another.)  It should be noted that the small substep size of r-RESPA
is sufficient for stable integration of the fast force alone; it is
only the introduction of the slow force that makes things unstable.
By contrast, the maximum energy error for the variational IMEX method
is nearly constant for all values of $\omega$, showing no sign of
resonance.  This is fully consistent with the theoretical result
obtained in~\autoref{thm:imex-no-resonance}.

\subsection{The Fermi--Pasta--Ulam Problem}
As an example of a nontrivial highly oscillatory problem with
nonlinear slow potential, we chose the modified Fermi--Pasta--Ulam
(FPU) problem considered by~\citet[I.5 and XIII]{HaLuWa2006}, whose
treatment we will now briefly review.  The FPU problem consists of $ 2
\ell $ unit point masses, which are chained together, in series, by
alternating weak nonlinear springs and stiff linear springs.  (This
particular setup is due to~\citealp{GaGiMaVa1992}, and is a variant of
the problem originally introduced by~\citealp{FePaUl1955}.)  Clearly,
this system becomes rather trivial if we make the stiff springs
``infinitely stiff,'' replacing them by rigid constraints (as done by
some numerical methods, such as SHAKE/RATTLE).  However, for finite
stiffness, the FPU system exhibits interesting dynamics due to the
coupling between fast and slow springs.

Let us denote the displacements of the point masses by $ q _1 ,
\ldots, q _{2 \ell} \in \mathbb{R} $ (where the endpoints $ q _0 = q
_{ 2\ell + 1 } = 0 $ are taken to be fixed), and their conjugate
momenta by $ p _i = \dot{q} _i $ for $ i = 1, \ldots , 2\ell $.  In
these variables, the FPU system has the Hamiltonian
\begin{equation*}
H \left( q, p \right) = \frac{1}{2} \sum _{ i = 1 } ^\ell \left( p ^2
  _{ 2 i - 1 } + p ^2 _{ 2 i } \right) + \frac{ \omega ^2 }{ 4 } \sum
_{ i = 1 } ^\ell \left( q _{ 2i } - q _{ 2i - 1 } \right) ^2 + \sum _{
  i = 0 } ^\ell \left( q _{ 2i + 1 } - q _{ 2i } \right) ^4 ,
\end{equation*}
which contains a quadratic potential for the $ \ell $ stiff linear
springs, each with frequency $\omega$, and a quartic potential for the
$ \ell + 1 $ soft nonlinear (cubic) springs.  However, it is helpful
to perform the coordinate transformation \citep[following][p. 22]{HaLuWa2006}
\begin{align*}
  x _{ 0, i } &= \frac{ q _{ 2 i } + q _{ 2 i - 1 } }{ \sqrt{2} }, & x
  _{ 1,i } &= \frac{ q _{ 2 i } - q _{ 2 i - 1 } }{\sqrt{2} },\\
  y _{ 0, i } &= \frac{ p _{ 2 i } + p _{ 2 i - 1 } }{ \sqrt{2}}, &
  y _{ 1,i } &= \frac{ p _{ 2 i } - p _{ 2 i - 1 } }{ \sqrt{2} },
\end{align*}
so that (modulo rescaling) $ x _{ 0, i } $ corresponds to the location
of the $i$th stiff spring's center, $ x _{ 1, i } $ corresponds to its
length, and $ y _{ 0, i } , y _{ 1, i } $ are the respective conjugate
momenta.  Writing the Hamiltonian in these new variables, we have
\begin{align*} 
  H (x, y) &= \frac{1}{2} \sum _{ i = 1 } ^\ell \left( y _{ 0,i } ^2 +
    y _{ 1,i } ^2 \right) + \frac{ \omega ^2 }{ 2 } \sum _{ i = 1 }
  ^\ell x _{ 1,i } ^2 \\ & \quad + \frac{ 1 }{ 4 } \left[ \left( x _{
        0, 1 } - x _{ 1,1 } \right) ^4 + \sum _{ i = 1 } ^{\ell - 1 }
    \left( x _{ 0, i + 1 } - x _{ 1 , i + 1 } - x _{ 0, i } - x _{ 1 ,
        i } \right) ^4 + \left( x _{ 0,\ell } + x _{ 1,\ell } \right)
    ^4 \right] ,
\end{align*} 
which considerably simplifies the form of the fast quadratic
potential.

Following the example treated numerically
by~\citet{HaLuWa2006,McON2007}, we consider an instance of the FPU
problem, integrated over the time interval $[0,200]$, with parameters
$ \ell = 3 $, $ \omega = 50 $, whose initial conditions are
\begin{equation*}
x _{ 0, 1 } (0) = 1, \quad y _{ 0, 1 } (0) = 1 , \quad x _{ 1, 1 } (0)
= \omega ^{-1} , \quad y _{ 1,1 } (0) = 1 ,
\end{equation*}
with zero for all other initial values.  This displays an interesting
and complex property of the FPU problem, called {\em slow energy
  exchange}, which results from the slow nonlinear coupling between
the stiff springs.  If we consider only the energies in the stiff
springs, written as
\begin{equation*}
I _j \left( x _{ 1,j } , y _{ 1,j } \right) = \frac{1}{2} \left( y _{
    1, j } ^2 + \omega ^2 x _{ 1,j } ^2 \right) , \qquad j = 1, 2, 3,
\end{equation*}
then the initial conditions start with all of the energy in $ I _1 $
and none in $ I _2, I _3 $.  Over the course of the time interval,
this energy is transferred in a characteristic way from $ I _1 $ to $
I _3 $, gradually transitioning through the middle spring $ I _2 $.
Furthermore, the total stiff energy $ I = I _1 + I _2 + I _3 $ remains
nearly constant, i.e., is an adiabatic invariant of the system.

\begin{figure}
  \subfloat[Reference solution: St\"ormer/Verlet with time step size $
  h = 0.001 $.]{\includegraphics{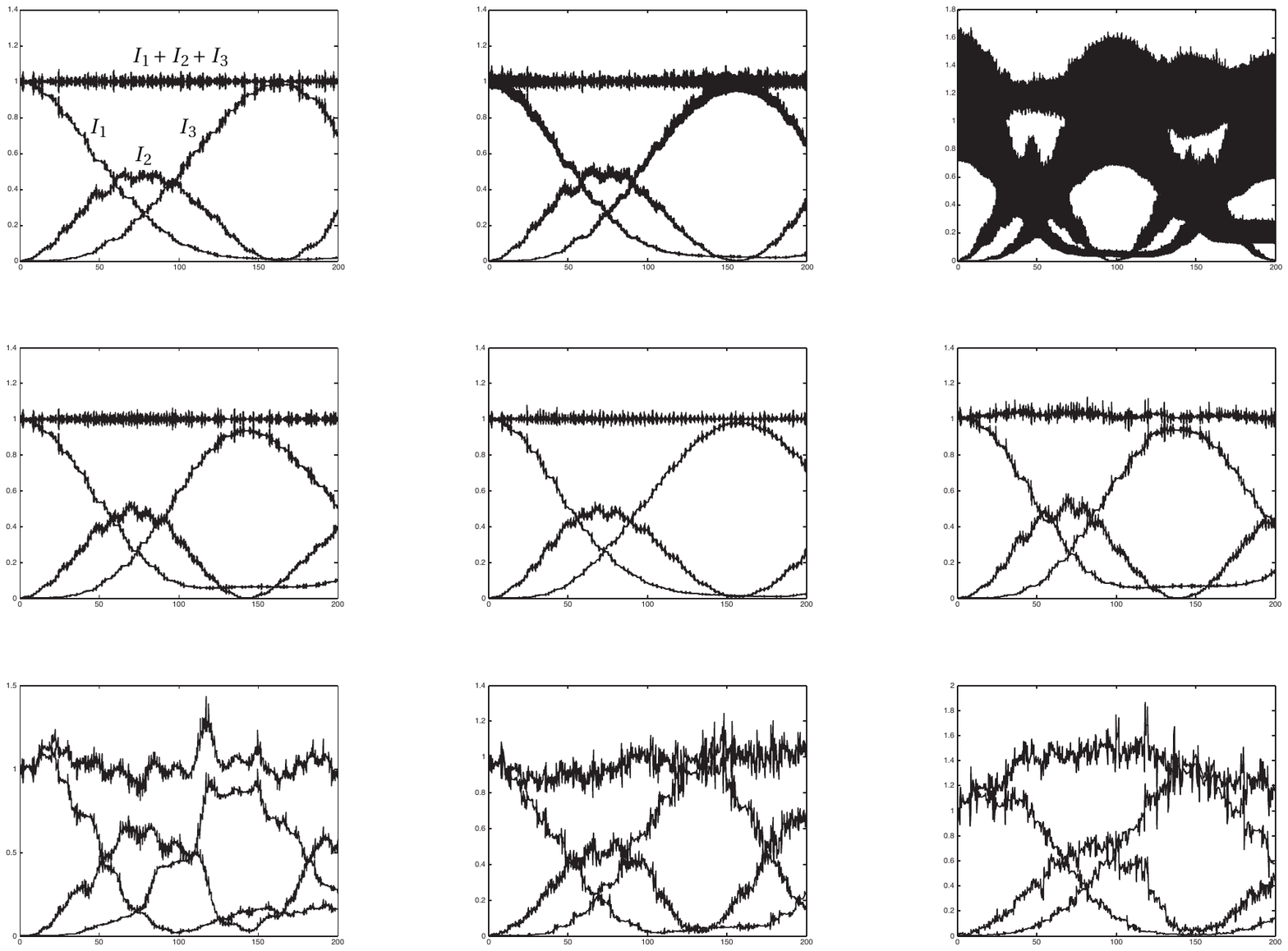}}
  \subfloat[St\"ormer/Verlet with $ h = 0.01
  $.]{\includegraphics{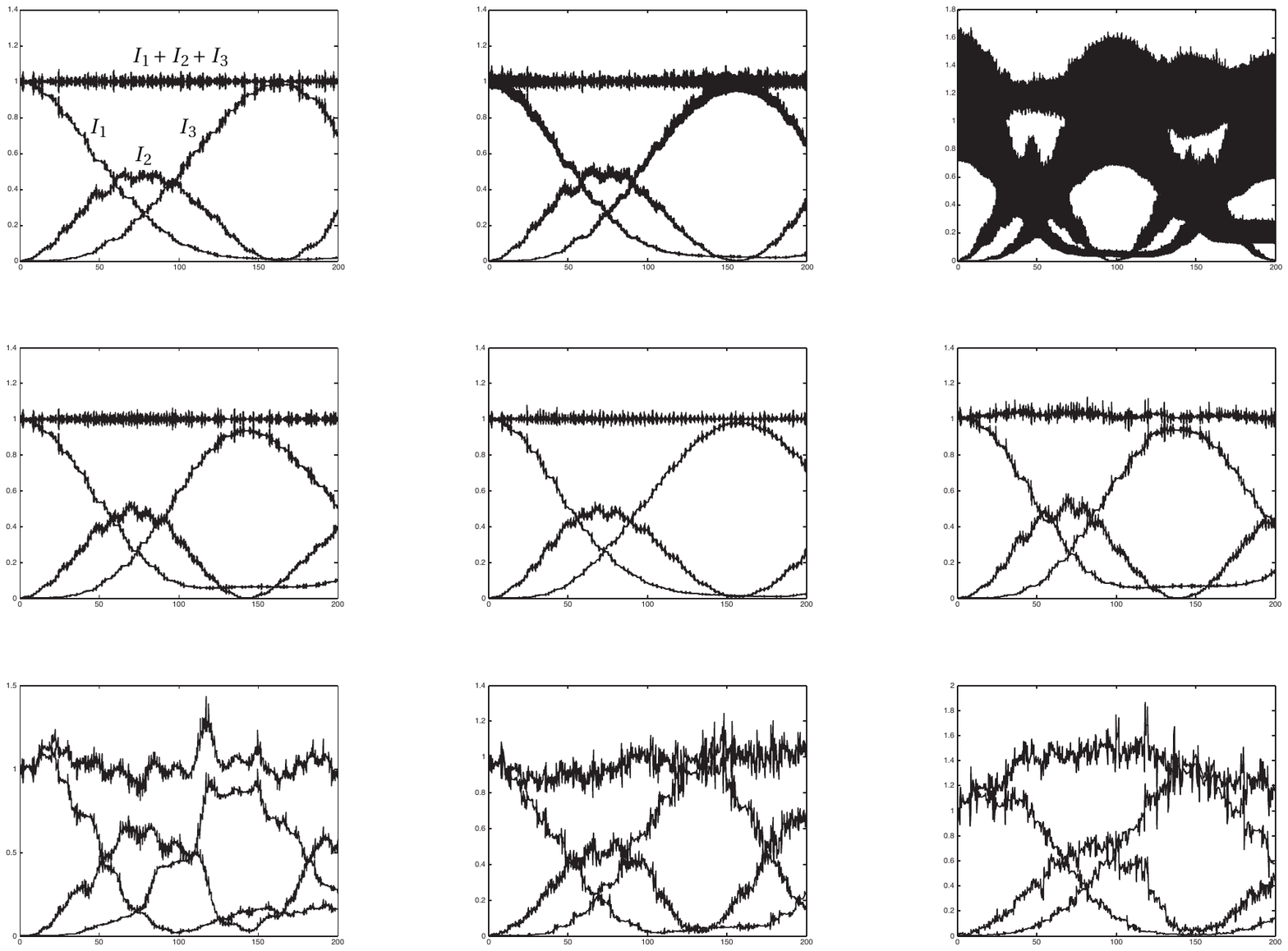}}
  \subfloat[St\"ormer/Verlet with $ h = 0.03 $.]{\includegraphics{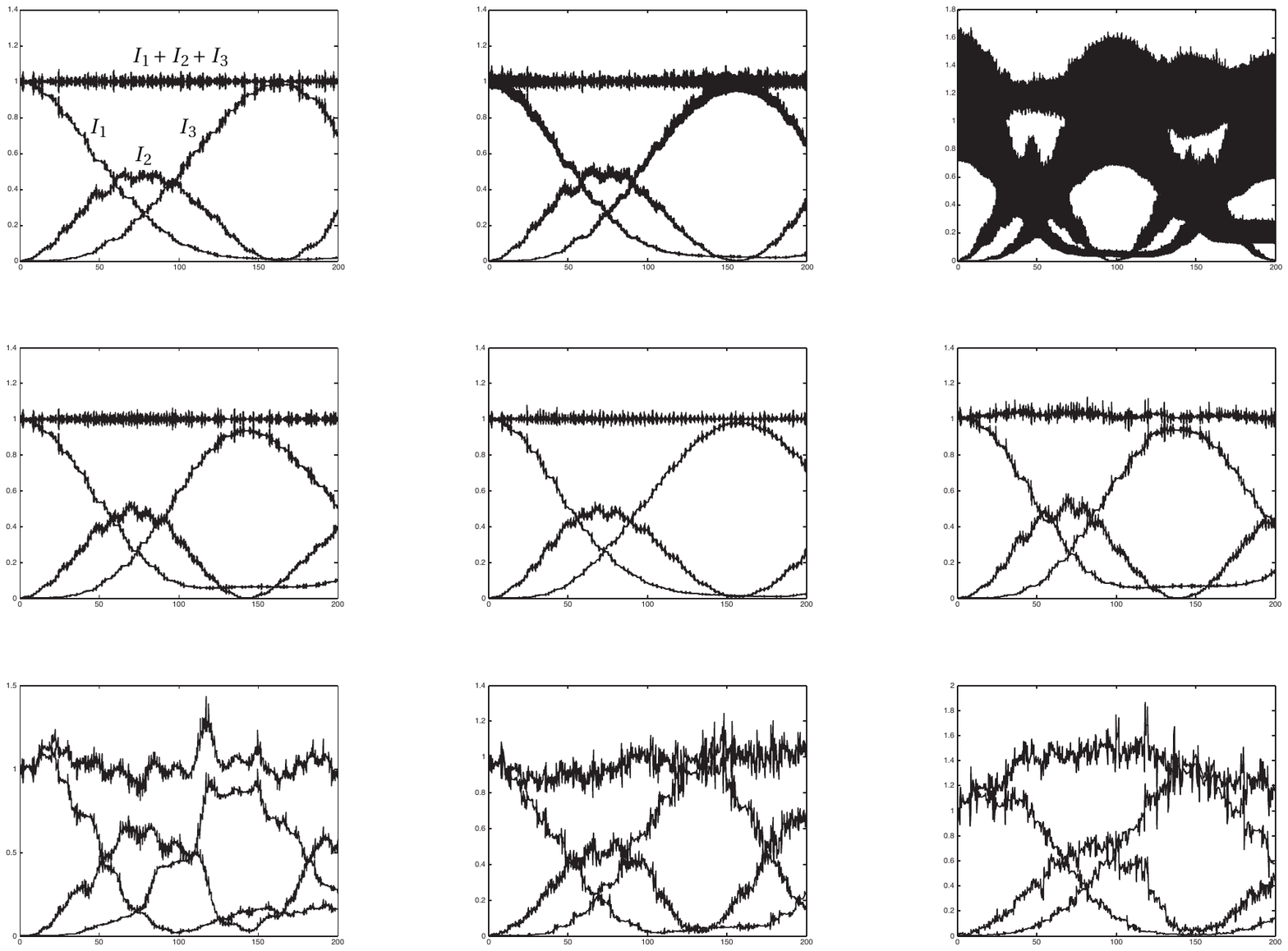}}\\
  \subfloat[IMEX with $ h = 0.03 $.]{\includegraphics{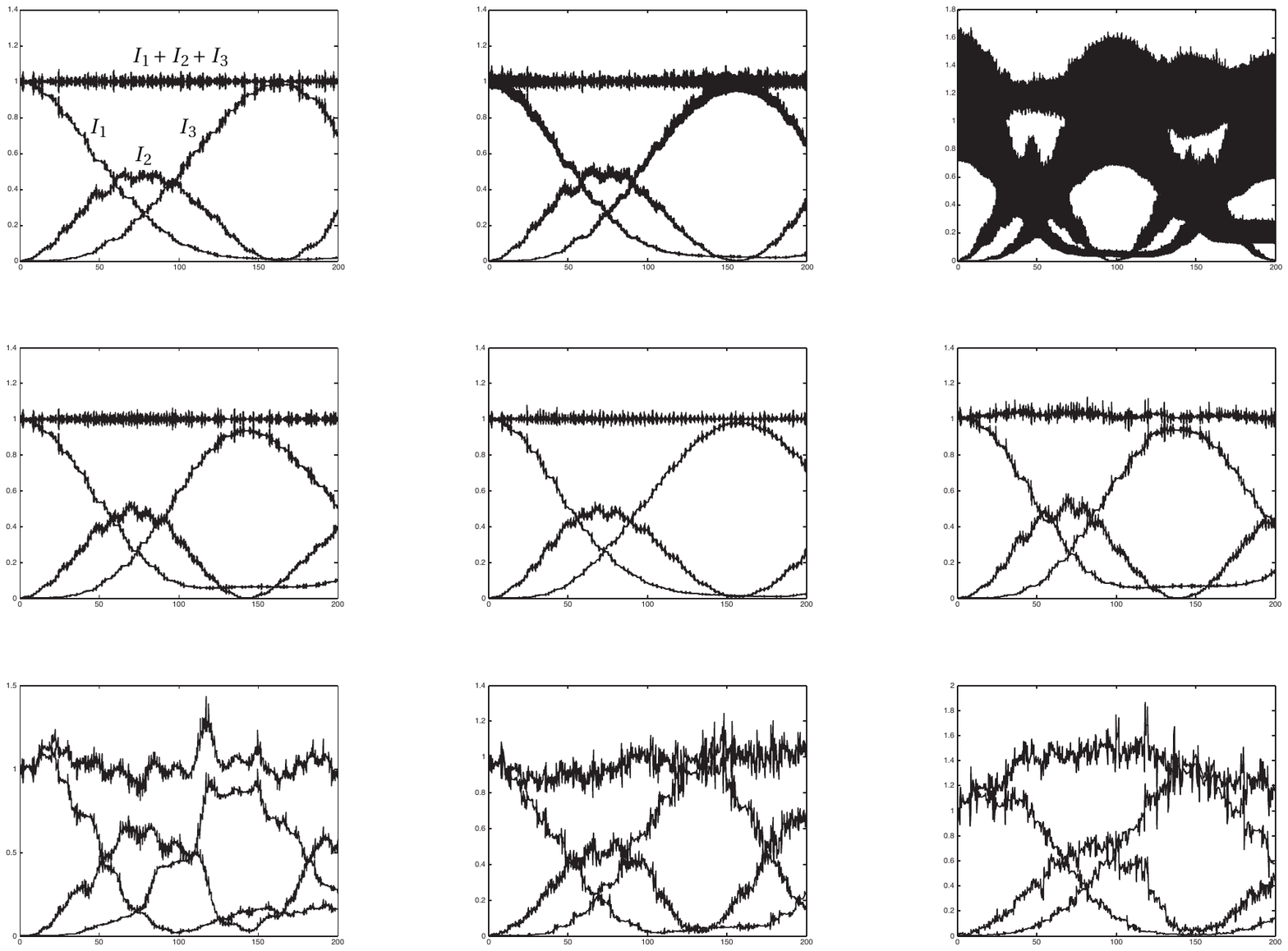}}
  \subfloat[IMEX with $ h = 0.1 $.]{\includegraphics{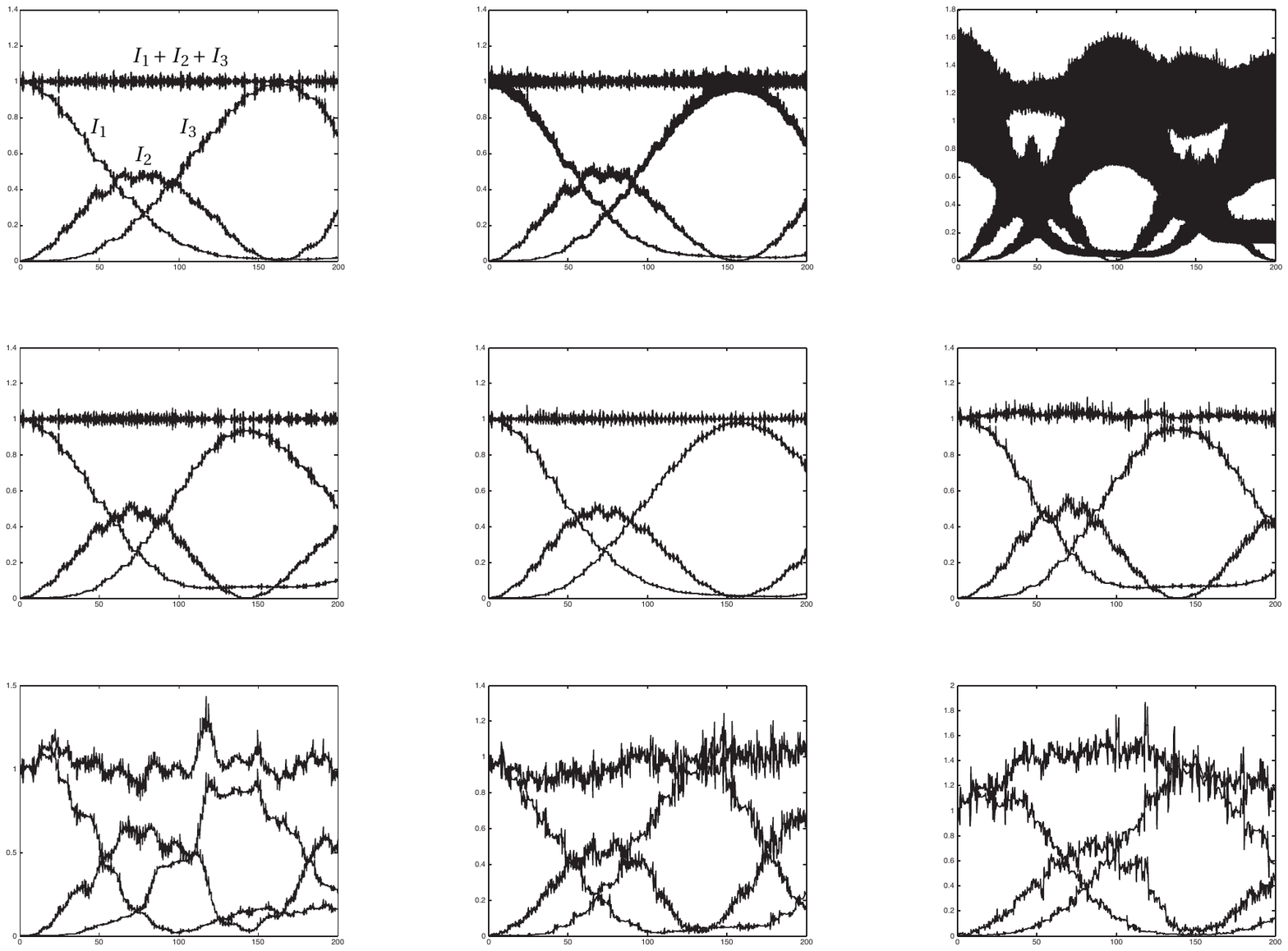}}
  \subfloat[IMEX with $ h = 0.15 $.]{\includegraphics{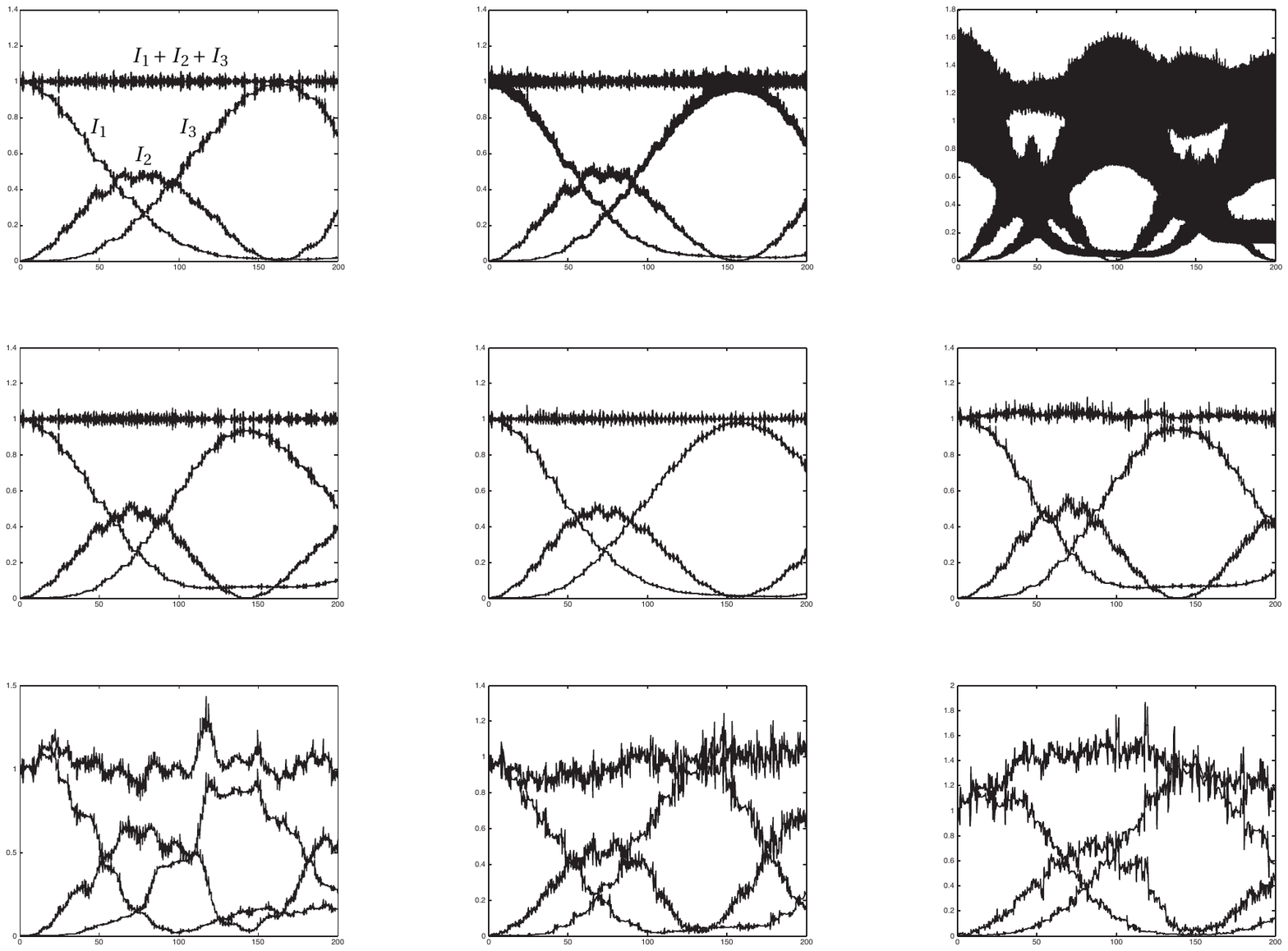}}\\
  \subfloat[IMEX with $ h = 0.2 $.]{\includegraphics{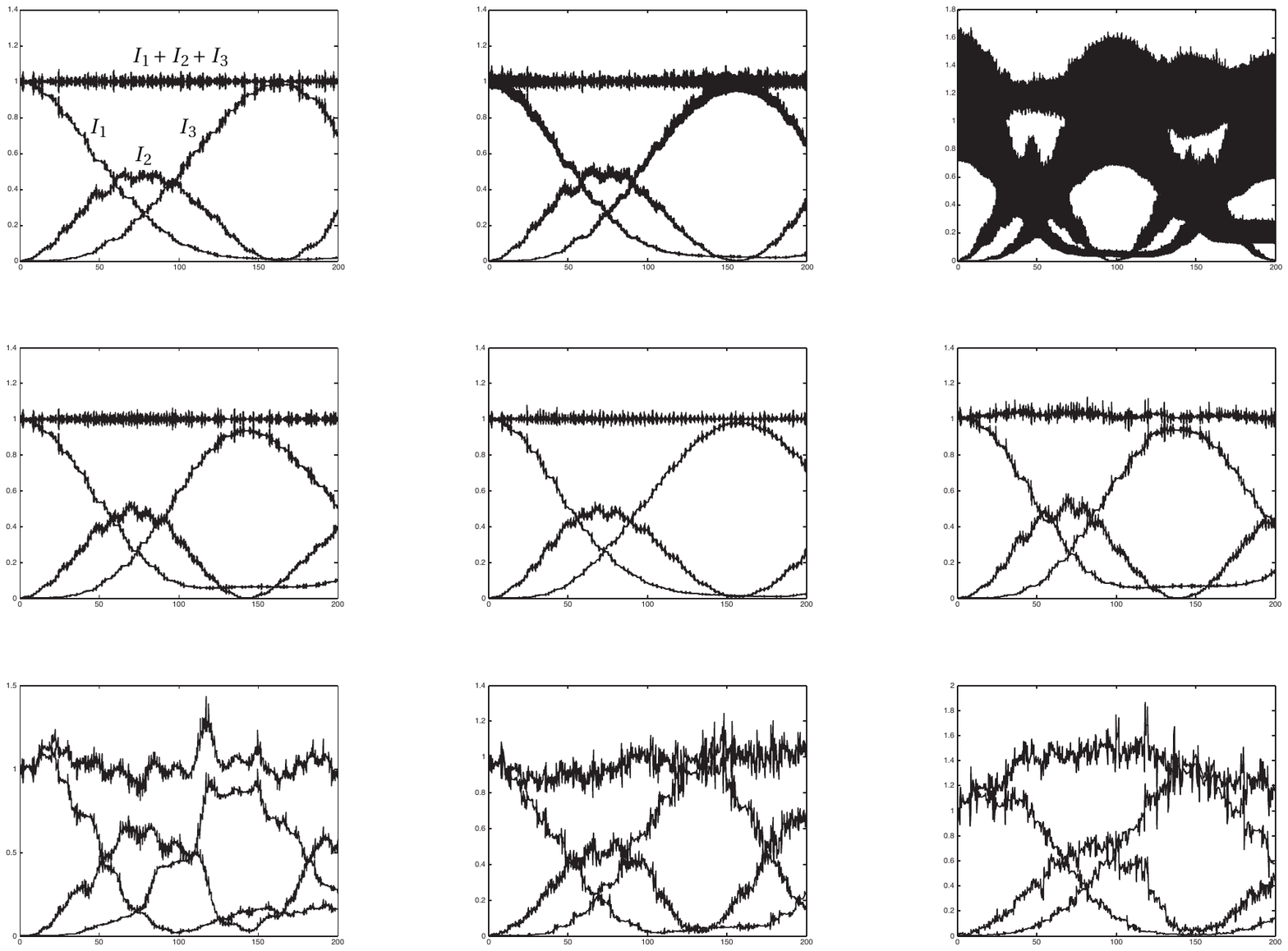}}
  \subfloat[IMEX with $ h = 0.25 $.]{\includegraphics{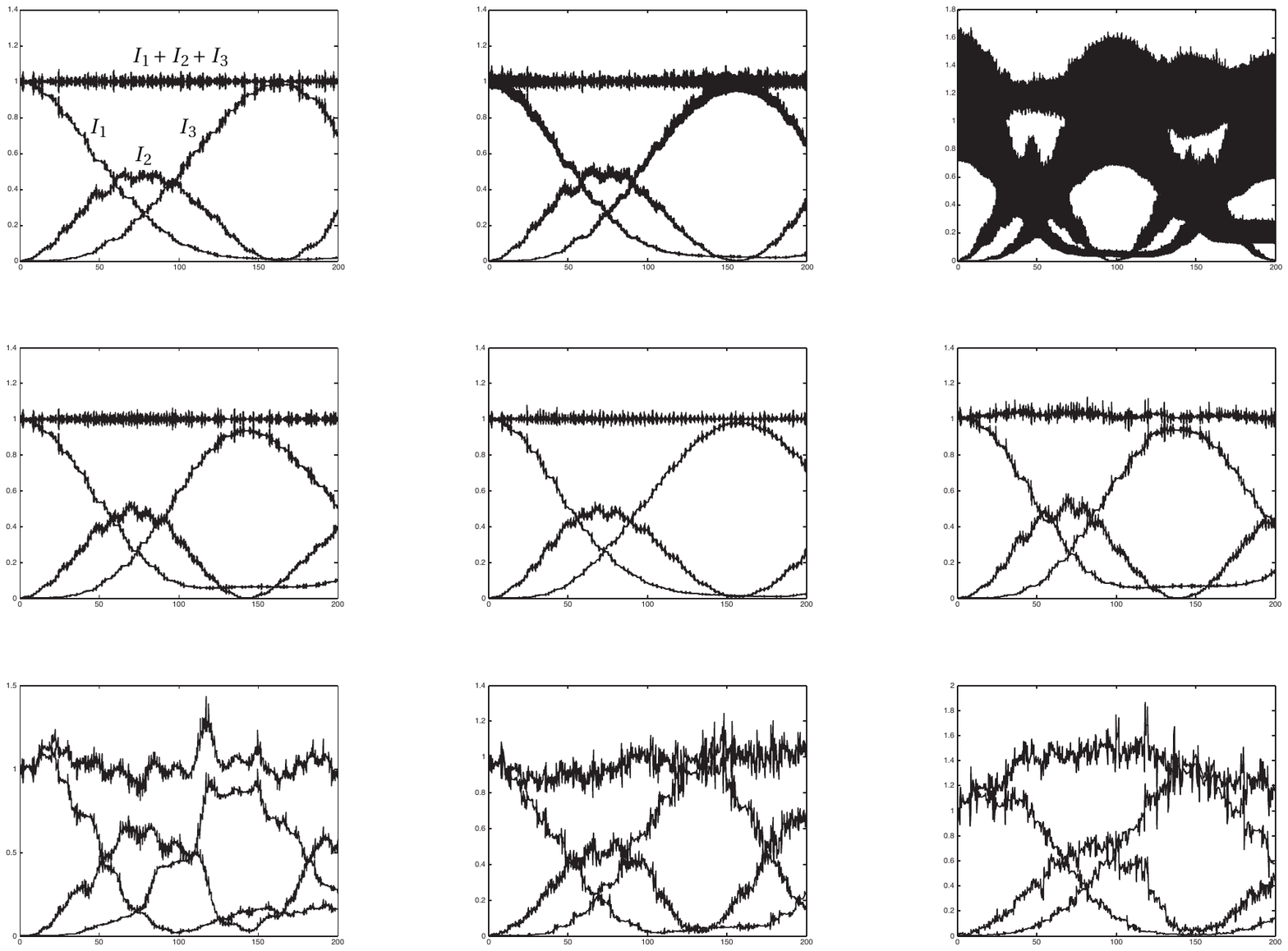}}
  \subfloat[IMEX with $ h = 0.3 $.]{\includegraphics{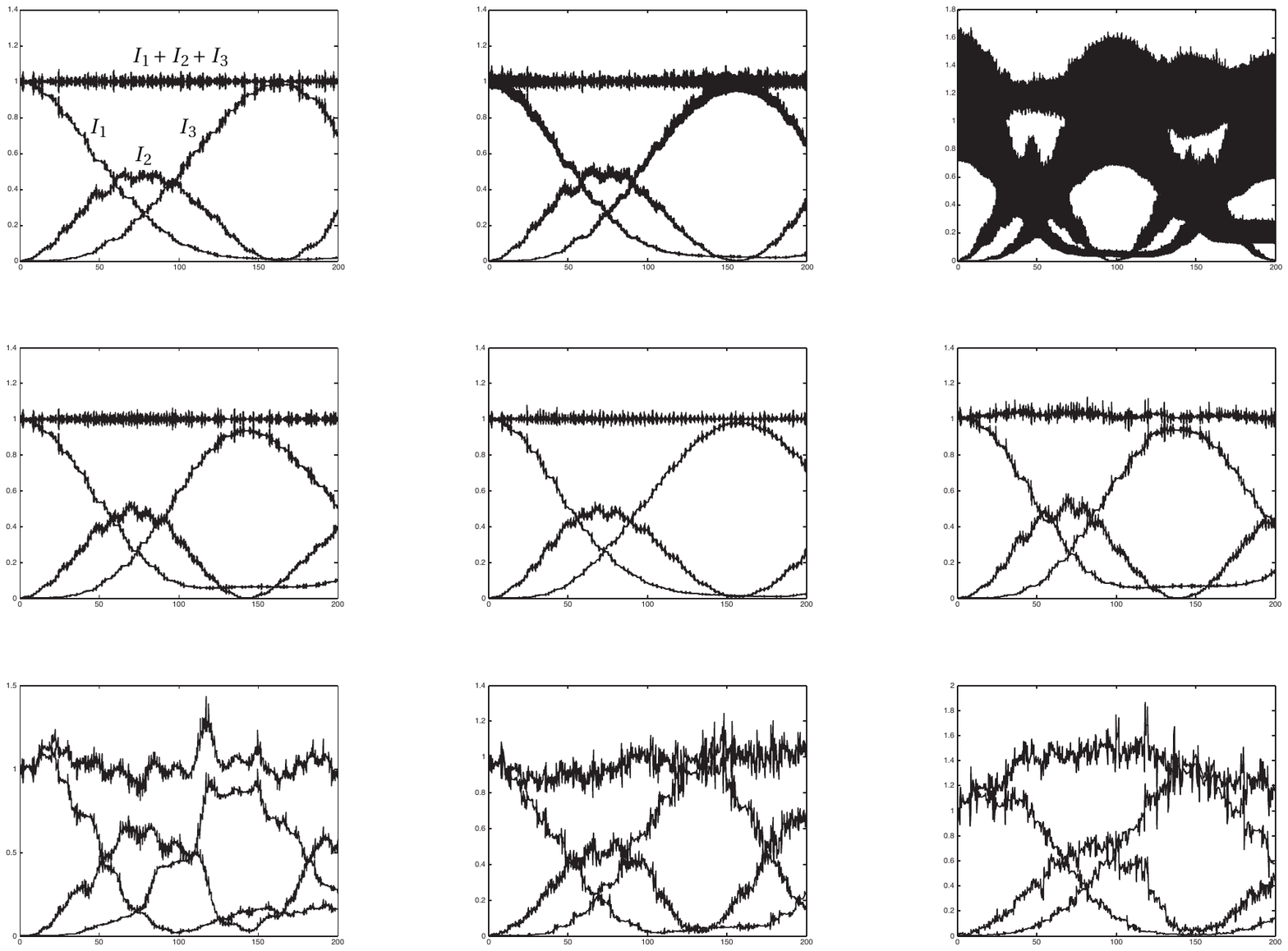}}
  \caption{The IMEX method robustly captures slow energy exchange in
    the Fermi--Pasta--Ulam problem with $ \omega = 50 $, even for large
    time steps.  Because the fast force is integrated implicitly, IMEX
    remains stable and degrades gradually as the time step size
    increases---unlike the fully explicit St\"ormer/Verlet method,
    which rapidly becomes unstable.}
  \label{fig:FPU}
\end{figure}

\autoref{fig:FPU} shows several numerical simulations of this FPU
energy exchange, computed both with St\"ormer/Verlet and with the
variational IMEX method, for different choices of time step size.  The
first plot is a reference solution, computed using St\"ormer/Verlet
with $ h = 0.001 $, fully resolving the fast oscillations.  However,
we see that the St\"ormer/Verlet solution's quality and stability
degrade rapidly as we increase the step size (for $ h = 0.03 $, we
have $ h \omega = 1.5 $, which is near the upper end of the stability
region $ \left\lvert h \omega \right\rvert \leq 2 $).  By contrast,
the variational IMEX method performs extremely well for $ h =
0.03\text{--}0.15$, degrading gradually as the time step size
increases.  Even as the numerical solution begins to undergo serious
degradation for $ h = 0.2\text{--}0.3 $, the qualitative structure of
the energy exchange behavior between $ I _1, I _2, I _3 $ is still
maintained.  Compare~\citealp[p. 24, Figure 5.3]{HaLuWa2006}; see
also~\citealp{McON2007}, who examine a wide variety of geometric
integrators, particularly trigonometric integrators, for the FPU
problem, with respect to both resonance stability and slow energy
exchange.  In particular, these authors found that the existing
trigonometric integrators exhibit a trade-off between correct energy
exchange behavior and resonance stability, and that these features
tend to be mutually exclusive.

\begin{figure}
  \subfloat[IMEX with $
  h=0.1$]{\includegraphics[width=.45\linewidth]{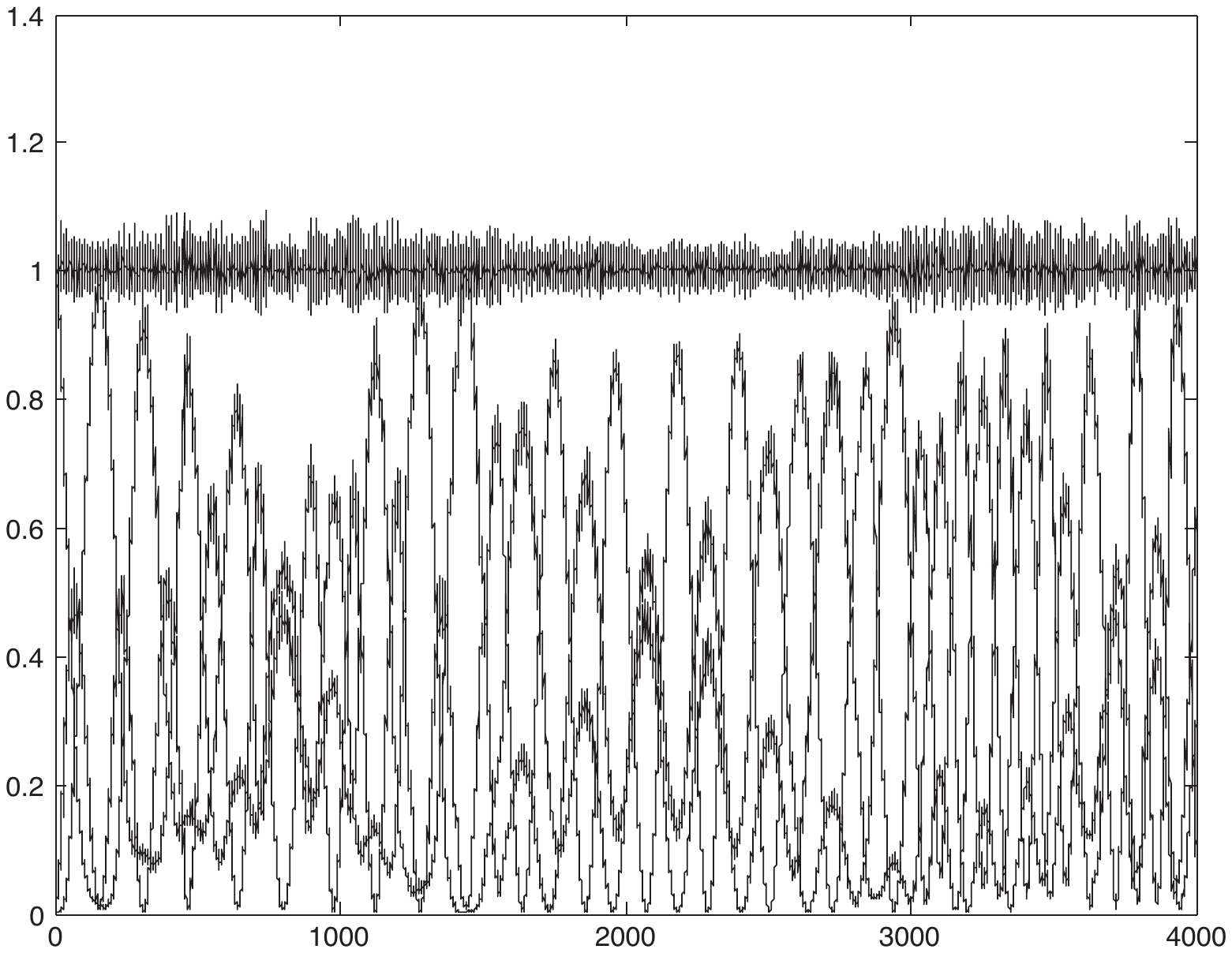}}
  \hfill \subfloat[IMEX with $ h=0.3
  $]{\includegraphics[width=.45\linewidth]{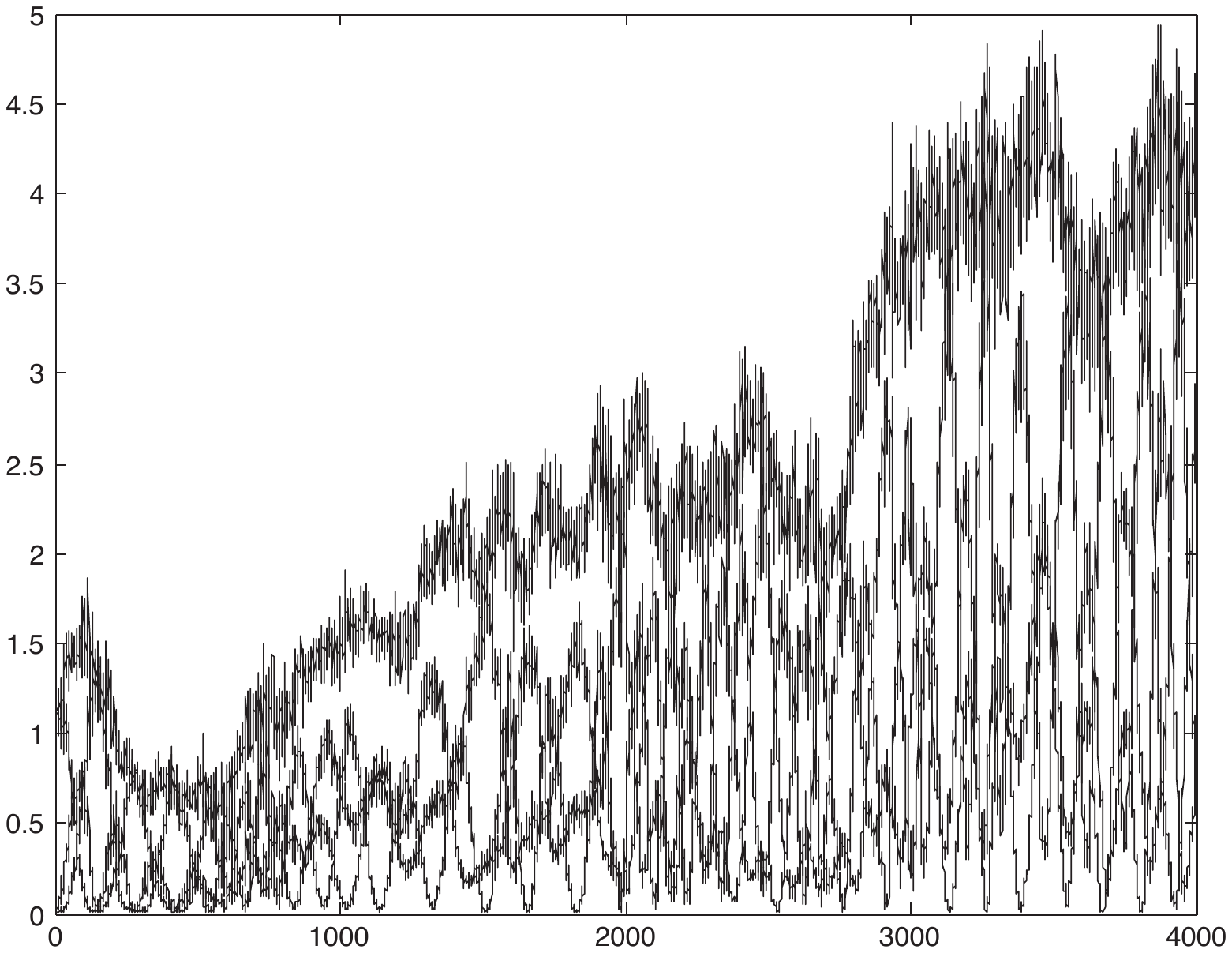}}
  \caption{Numerical simulation of the FPU problem for $ T = 4000 $,
    which shows the behavior of the IMEX method on the $ \omega ^2 $
    scale.  For $ h = 0.1 $, we already have $ h \omega = 5 $, yet the
    oscillatory behavior and adiabatic invariant are qualitatively
    correct.  By contrast, for $ h = 0.3 $, the method has begun to blow
    up; oscillatory coupling is a drawback of implicit midpoint
    methods for large time steps.}
  \label{fig:FPUlongtime}
\end{figure}

In \autoref{fig:FPUlongtime} we show the numerical behavior of the
variational IMEX method, applied to this same FPU problem, on a longer
time scale ($T=4000$) and for large time steps ($ h = 0.1, 0.3 $).  At
$ h = 0.1 $, the IMEX simulation still displays the correct
qualitative energy behavior, with respect to both the slow energy
exchange and the adiabatic invariant $I$, and the numerical solution
remains bounded.  However, by $ h = 0.3 $, numerical stability has
broken down, as oscillatory coupling in the fast modes leads to
unbounded amplitude growth.  This illustrates one of the drawbacks of
implicit midpoint-type methods: despite the lack of linear resonances,
numerical instability can still result for very large time steps due
to nonlinear coupling \citep{AsRe1999,AsRe1999a}.

This example was chosen to demonstrate that the variational IMEX
method does not attain its stability merely by ``smoothing out'' the
fast frequencies, in a way that might destroy the structure of any
fast-slow nonlinear coupling.  Rather, despite the fact that it does
not resolve the fast frequencies, the method is still capable of
capturing the complex multiscale interactions seen in the FPU
problem.

\section{Analysis of Slow Energy Exchange in the IMEX Method}
\label{sec:see}

In the previous section, the numerical experiments for the
Fermi--Pasta--Ulam problem seemed to suggest that the variational IMEX
method preserves the slow energy exchange between the fast oscillatory
modes.  This is somewhat surprising, since the method does not
actually resolve these fast oscillations.  However, in this section,
we will prove that, in fact, this method {\em does} accurately
reproduce the slow energy exchange behavior, as long as the numerical
solutions remain bounded.  This is demonstrated by showing that the
variational IMEX method can be understood as a {\em modified impulse
  method}; that is, the midpoint step exactly resolves the
oscillations of some modified differential equation.  We can then
apply some of the existing theory about numerical energy exchange for
impulse methods.  (It should be noted that impulse methods, which
originated with the work of~\citealp{Deuflhard1979}, can be understood
as a special case of trigonometric integrators when applied to highly
oscillatory problems.)

First, let us rewrite the fast oscillatory system $ \ddot{q} + \Omega
^2 q = 0 $ as the first-order system
\begin{equation*}
  \begin{pmatrix}
    \Omega \dot{q} \\
    \dot{p} 
  \end{pmatrix} = 
  \begin{pmatrix}
    0 & \Omega \\
    - \Omega  & 0
  \end{pmatrix}
  \begin{pmatrix}
    \Omega q \\
    p
  \end{pmatrix},
\end{equation*}
so it follows that the exact solution satisfies
\begin{equation*}
  \begin{pmatrix}
    \Omega q (t+h) \\
    p (t+h)
  \end{pmatrix} =
  \begin{pmatrix}
    \cos \left( h \Omega \right)  & \sin \left( h \Omega \right) \\
    - \sin \left( h \Omega \right) & \cos \left( h \Omega \right)
  \end{pmatrix}
  \begin{pmatrix}
    \Omega q (t) \\
    p (t)
  \end{pmatrix} .
\end{equation*}
We will now show that the implicit midpoint method effectively
replaces this rotation matrix for $ \Omega $ by the rotation matrix
corresponding to a modified $ \tilde{ \Omega } $.  In the transformed
coordinates just introduced, the implicit midpoint method has the
expression
\begin{equation*}
  \begin{pmatrix}
    I & - h \Omega / 2 \\
    h \Omega / 2 & I
  \end{pmatrix}
  \begin{pmatrix}
    \Omega q _{ n + 1 } \\
    p _{ n + 1 }
  \end{pmatrix} =
  \begin{pmatrix}
    I & h \Omega /2 \\
    - h \Omega / 2 & I 
  \end{pmatrix}
  \begin{pmatrix}
    \Omega q _n \\
    p _n 
  \end{pmatrix}.
\end{equation*}
Therefore, if we take the skew matrix
\begin{equation*}
A =
\begin{pmatrix}
0 &  \Omega \\
-\Omega & 0
\end{pmatrix},
\end{equation*}
it follows that 
\begin{equation*}
  \begin{pmatrix}
    \Omega q _{ n + 1 } \\
    p _{ n + 1 }
  \end{pmatrix} = \left( I - hA/2 \right) ^{-1} \left( I + hA/2 \right)
  \begin{pmatrix}
    \Omega q _n \\
    p _n 
  \end{pmatrix}.
\end{equation*}
Notice that the expression $ \left( I - hA/2 \right) ^{-1} \left( I +
  hA/2 \right) = \operatorname{cay}(hA)$ is the {\em Cayley
  transform}, which maps skew matrices to special orthogonal matrices
(and can be seen as an approximation to the matrix exponential map,
which gives the exact solution).  Hence the stability matrix is
special orthogonal, so we can write
\begin{equation*}
  \begin{pmatrix}
    \Omega q _{ n + 1 }  \\
    p _{ n + 1 } 
  \end{pmatrix} =
  \begin{pmatrix}
    \cos \left( h \tilde{\Omega} \right)  & \sin \left( h \tilde{\Omega} \right) \\
    - \sin \left( h \tilde{\Omega} \right) & \cos \left( h
      \tilde{\Omega} \right)
  \end{pmatrix}
  \begin{pmatrix}
    \Omega q _n  \\
    p _n
  \end{pmatrix} 
\end{equation*}
for some modified frequency $ \tilde{\Omega} $.  Therefore, the
stability matrix for the implicit midpoint method corresponds to the
{\em exact} flow matrix for a modified oscillatory system.

As an example, suppose we have $ \Omega =
\left( \begin{smallmatrix}
  0 & 0\\
  0 & \omega I 
\end{smallmatrix} \right) $ for some constant frequency $\omega$.
Applying the Cayley transform, it can be seen that the modified
frequency $ \tilde{\omega} $ satisfies
\begin{equation*}
h \omega / 2 = \tan \left( h \tilde{\omega} / 2 \right) .
\end{equation*}
Squaring both sides, this becomes
\begin{equation*}
  \left( h \omega / 2 \right) ^2 = \tan ^2 \left( h \tilde{\omega} / 2
  \right) = \frac{1 - \cos \left( h \tilde{\omega} \right)  }{ 1 + \cos
    \left( h \tilde{\omega} \right) } ,
\end{equation*}
which finally gives the solution for the modified frequency,
\begin{equation*}
\tilde{\omega} = \frac{ 1 }{ h } \arccos \left( \frac{ 1 - \left( h \omega / 2
    \right) ^2 }{ 1 + \left( h \omega / 2 \right) ^2 } \right) .
\end{equation*}

\begin{remark}
  This perspective provides another explanation as to why the
  variational IMEX method does not exhibit resonance: we always have $
  h \tilde{\omega} < \pi $.  In fact, the Cayley transform does not
  map to a rotation by $ \pi $, except in the limit as $ h \omega
  \rightarrow \infty $.  Therefore, for any finite $h$ and $ \omega $,
  we will never encounter the resonance points corresponding to
  integer multiples of $\pi$.

  As an aside, this also leads to another possible interpretation for
  the onset of nonlinear instability, if the time step size $h$
  becomes too large (as we saw in~\autoref{fig:FPUlongtime}).  Since $
  \tilde{\omega} < \pi / h $, the modified frequency $ \tilde{\omega}
  $ must shrink as $h$ grows.  Informally, then, if $ \tilde{\omega} $
  is very small, this can be seen as leading to amplitude growth in
  the fast modes, since it requires less energy to induce this
  amplification.
\end{remark}

Since the implicit midpoint method has now been seen as the exact
solution of a modified system, we can write the variational IMEX
method as the following modified impulse scheme:
\begin{alignat*}{2}
  \textbf{Step 1:}&& p _n ^+ &= p _n - \frac{ h }{ 2 } \nabla U \left(
    q _n \right), \\
  \textbf{Step 2:} &\quad&
  \begin{pmatrix}
    \Omega q _{ n + 1 } \\
    p _{ n + 1 } ^-
  \end{pmatrix} &=
  \begin{pmatrix}
    \cos \left( h \tilde{\Omega} \right)  & \sin \left( h \tilde{\Omega} \right) \\
    - \sin \left( h \tilde{\Omega} \right) & \cos \left( h
      \tilde{\Omega} \right)
  \end{pmatrix}
  \begin{pmatrix}
    \Omega q _n \\
    p _n ^+
  \end{pmatrix} ,\footnotemark
  \\
  \textbf{Step 3:}&& p _{n+1} &= p _{n+1}^- - \frac{ h }{ 2 } \nabla U
  \left( q _{n+1} \right) .
\end{alignat*}
\footnotetext{Note that although Step 2 might appear to be
  ill-defined, due to the fact that $\Omega$ is possibly singular, the
  singularity can be removed by substituting the relation $ h \Omega /
  2 = \tan \left( h \tilde{ \Omega } / 2 \right) $.  The explicit
  equation for $ q _{ n + 1 } $ and $ p _{ n + 1 } ^- $ is then
  calculated to be
\begin{equation*}
  \begin{pmatrix}
    q _{ n + 1 }  \\
    p _{ n + 1 } ^-
  \end{pmatrix} =
  \begin{pmatrix}
    \cos \left( h \tilde{\Omega} \right) & h/2 \left( 1 +
      \cos \left( h \tilde{ \Omega } \right) \right)  \\
    - 2/h  \left( 1 - \cos \left( h \tilde{ \Omega } \right) \right) &
    \cos \left( h \tilde{\Omega} \right)
  \end{pmatrix}
  \begin{pmatrix}
    q _n  \\
    p _n ^+
  \end{pmatrix} ,
\end{equation*}
which is seen to recover the correct, purely kinematic equation when $
\Omega = \tilde{ \Omega } = 0 $.
}Suppose again that  $ \Omega =
\left( \begin{smallmatrix}
  0 & 0\\
  0 & \omega I 
\end{smallmatrix} \right) $ for some constant frequency $\omega$, and
likewise $ \tilde{\Omega} = \left( \begin{smallmatrix}
    0 & 0\\
    0 & \tilde{\omega} I
  \end{smallmatrix} \right) $.  (This includes the case of the FPU
problem.)  The slow energy exchange behavior of this system was
analyzed in detail by \citet[XIII, see especially p. 495]{HaLuWa2006}
using the so-called {\em modulated Fourier expansion}; we now give a
brief, high-level summary of this work.  In the notation of
\citeauthor{HaLuWa2006}, the exact solution is asymptotically expanded
as $ x _\ast (t) \sim q (t) $, where
\begin{equation*}
  x _\ast (t) = y (t) + e ^{ i \omega t } z (t) + e ^{ - i \omega t } \bar{z} (t) .
\end{equation*}
Here, $ y (t) $ is a smooth real-valued function and $ z (t) $ is a
smooth complex-valued function, and these can be partitioned according
to the blocks of $\Omega$ as $ y = \left( y _0, y _1 \right) $ and $ z
= \left( z _0, z _1 \right) $.  They show that, assuming the exact
solution has energy bounded independent of $\omega$, this implies $ z
(t) = \mathcal{O} \left( \omega ^{-1} \right) $, so $z$ describes the
slow-scale evolution of the system.  Plugging in this ansatz for a
highly oscillatory system, and eliminating the variables $ y _1 $ and
$ z _0 $, the slow evolution turns out to be described by
\begin{equation*}
  2 i \omega \dot{ z }_1  = \frac{ \partial g _1 }{ \partial x _1 }
  \left( y _0 , 0 \right) z _1 + \mathcal{O} \left( \omega ^{ - 3 }
  \right) ,
\end{equation*}
where here the slow force $ g (q) = - \nabla U (q) $ has also been
block-decomposed as $ g = \left( g _0, g _1 \right) $.

\Citeauthor{HaLuWa2006} compare the above with the numerical solution
for a trigonometric integrator, which is similarly expanded as
\begin{equation*}
  x _h (t) = y _h (t) + e ^{ i \omega t } z _h (t) + e ^{ -
    i \omega t } \bar{z} _h (t) ,
\end{equation*}
with $ y _h = \left( y _{h,0}, y _{ h, 1 } \right) $ and $ z _h =
\left( z _{ h, 0 } , z _{ h, 1 } \right) $.  For the unmodified
Deuflhard/impulse method, in particular, the slow-scale numerical
evolution is given by
\begin{equation*}
  2 i \omega \dot{ z }_{ h, 1 } = \frac{ \partial g _1 }{ \partial x _1
  } \left( y _{ h,0}, 0 \right) z _{ h, 1 } + \mathcal{O} \left(
    \omega ^{ - 3 } \right) ,
\end{equation*}
which implies that the equation for the impulse solution $ z _{ h, 1 }
$ is consistent with that for $ z _1 $.

We now finally have what we need to prove our main result on the slow
energy exchange behavior of the variational IMEX method.
\begin{theorem}
  Let the variational IMEX method be applied to the highly oscillatory
  problem above, and suppose the numerical solution remains bounded.
  Then the ordinary differential equation for $ \tilde{ z } _{ h, 1 }
  $, describing the slow energy exchange in the numerical solution, is
  consistent with that for $ z _1 $ in the exact solution; this holds
  up to order $ \mathcal{O} \left( \omega ^{-3} \right) $.
\end{theorem}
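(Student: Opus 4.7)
The plan is to exploit the modified-impulse representation of IMEX established earlier in this section, which writes the IMEX flow as an explicit slow kick, an exact rotation at the modified frequency $\tilde{\Omega}$, and a second slow kick. Viewed in this way, IMEX falls squarely within the family of trigonometric integrators analyzed via modulated Fourier expansion in \citet[XIII]{HaLuWa2006}; the idea is therefore to apply their machinery to this reformulation, read off the slow-scale ODE satisfied by $\tilde{z}_{h,1}$, and compare it with the exact ODE for $z_1$ quoted above.

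The first step is to identify IMEX as a trigonometric integrator and extract its filter functions from the explicit Cayley-transformed propagator given in the footnote. The key asymptotic input comes from the Cayley identity $h\tilde{\omega} = 2\arctan(h\omega/2)$, together with the closed forms $\sin(h\tilde{\omega}) = h\omega/\bigl(1 + (h\omega/2)^2\bigr)$ and $\cos(h\tilde{\omega}) = \bigl(1 - (h\omega/2)^2\bigr)/\bigl(1 + (h\omega/2)^2\bigr)$. These let one expand the IMEX filter functions in $\omega^{-1}$ and check, entry by entry, that they agree with the filter functions of the unmodified Deuflhard/impulse method to sufficiently high order, with discrepancies of size $\mathcal{O}(\omega^{-2})$ at worst. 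The block structure $\tilde{\Omega} = \left(\begin{smallmatrix} 0 & 0 \\ 0 & \tilde{\omega} I \end{smallmatrix}\right)$ inherited from $\Omega$ is important here: it ensures that the elimination of $y_{h,1}$ and $z_{h,0}$ in the expansion proceeds exactly as in the impulse case treated by HLW, and that the slow nonlinear kicks $\pm(h/2)\nabla U$ from Steps~1 and~3 remain untouched.

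The main obstacle, and the bulk of the work, is to propagate these $\mathcal{O}(\omega^{-2})$ filter perturbations through HLW's derivation of the slow equation and verify that they only contribute an $\mathcal{O}(\omega^{-3})$ correction to its right-hand side. I would follow their argument line by line, invoking the boundedness hypothesis on the numerical solution to control the nonlinear slow term and to justify truncation of the modulated Fourier expansion. Once this is done, the slow equation for $\tilde{z}_{h,1}$ inherits the same leading form $2i\omega\,\dot{\tilde{z}}_{h,1} = \frac{\partial g_1}{\partial x_1}(y_{h,0},0)\,\tilde{z}_{h,1} + \mathcal{O}(\omega^{-3})$ as in the impulse case, and hence is consistent with the exact ODE for $z_1$ at the required order, which explains the accurate slow energy exchange observed for the FPU problem in~\autoref{fig:FPU}.
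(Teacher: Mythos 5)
There is a genuine gap at the central step of your argument. You claim that the entries of the IMEX propagator agree with those of the unmodified Deuflhard/impulse method up to discrepancies of size $\mathcal{O}\left(\omega^{-2}\right)$, and you plan to push these small perturbations through the modulated Fourier derivation. But the discrepancy is not small. In the highly oscillatory regime ($h$ fixed, $\omega \to \infty$, which is precisely the regime in which an $\mathcal{O}\left(\omega^{-3}\right)$ statement is meaningful), the Cayley relation $h\omega/2 = \tan\left(h\tilde{\omega}/2\right)$ forces $h\tilde{\omega} \to \pi$, so that $\cos\left(h\tilde{\omega}\right) \to -1$ and $\sin\left(h\tilde{\omega}\right) = h\omega\bigl/\bigl(1+(h\omega/2)^2\bigr) \to 0$, whereas $\cos\left(h\omega\right)$ and $\sin\left(h\omega\right)$ oscillate throughout $[-1,1]$. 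The difference between the two propagators is therefore $\mathcal{O}(1)$, not $\mathcal{O}\left(\omega^{-2}\right)$: your closed forms for $\sin\left(h\tilde{\omega}\right)$ and $\cos\left(h\tilde{\omega}\right)$ are correct, but the conclusion you draw from them holds only when $h\omega \ll 1$, i.e., when the fast oscillations are fully resolved, which defeats the purpose. Consequently the perturbative strategy---impulse method plus small filter corrections---cannot be carried out.

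The paper's proof goes a different way, and the difference is essential. It treats IMEX as the impulse method for a genuinely different frequency: Step 2 is the \emph{exact} oscillatory flow at frequency $\tilde{\Omega}$, so the modulated Fourier ansatz for the numerical solution must carry the phase $e^{i\tilde{\omega} t}$, and the impulse-method slow equation then acquires $\tilde{\omega}$ on the left-hand side, $2 i \tilde{\omega}\, \dot{\tilde{z}}_{h,1} = \cdots$. The saving observation is that Step 2 propagates the original scaled variables $\left(\Omega q_n, p_n\right)$ rather than $\left(\tilde{\Omega} q_n, p_n\right)$; rescaling between these introduces a compensating factor $\tilde{\omega}/\omega$ on the right-hand side. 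The two occurrences of $\tilde{\omega}$ cancel exactly, leaving
\begin{equation*}
  2 i \omega\, \dot{\tilde{z}}_{h,1} = \frac{\partial g_1}{\partial x_1}\left(\tilde{y}_{h,0}, 0\right) \tilde{z}_{h,1} + \mathcal{O}\left(\omega^{-3}\right),
\end{equation*}
which matches the exact slow equation for $z_1$. The consistency is thus an exact algebraic cancellation of the frequency modification, not a statement that the modification is small. To salvage your approach you would need to build the change of frequency into the oscillatory ansatz itself, rather than treating it as a perturbation of the filter functions.
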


\begin{proof}
  As we have previously shown, the IMEX scheme corresponds to a
  modified impulse method with frequency $ \tilde{ \omega } $.
  Therefore, to get the equation for $ \tilde{ z } _{ h, 1 } $, we
  must modify the equation above for $ z _{ h , 1 } $ by replacing $
  \omega $ with $ \tilde{ \omega } $ on the left hand side.  However,
  notice that Step 2 of the modified method advances the {\em
    original} state vector $ \left( \begin{smallmatrix}
      \Omega q _n \\
      p _n
  \end{smallmatrix} \right) $, rather than the modified $
\left( \begin{smallmatrix}
    \tilde{\Omega} q _n \\
    p _n
\end{smallmatrix} \right) $.  Changing from $ \left( \begin{smallmatrix}
    \Omega q _n \\
    p _n
  \end{smallmatrix} \right) $ to $
\left( \begin{smallmatrix}
    \tilde{\Omega} q _n \\
    p _n
  \end{smallmatrix} \right) $ also introduces a scaling factor of $
\tilde{ \omega} /\omega $ on the right hand side.  Therefore, the
variational IMEX solution satisfies the slow-scale equation
\begin{equation*}
  2 i \tilde{ \omega } \dot{ \tilde{ z } }_{ h, 1 } = \frac{ \tilde{
      \omega } }{ \omega } \frac{ \partial g _1 }{ \partial x _1 }
  \left( \tilde{ y } _{ h,0}, 0 \right) \tilde{ z } _{ h, 1 } + \mathcal{O} \left( \omega  ^{ - 4 } \tilde{ \omega } \right) .
\end{equation*}
Finally, cancelling the $ \tilde{ \omega } $ factors and multiplying
by $\omega$, we once again get
\begin{equation*}
  2 i \omega \dot{ \tilde{ z} }_{ h, 1 } = \frac{ \partial g _1 }{ \partial x _1
  } \left( \tilde{ y } _{ h,0}, 0 \right) \tilde{ z } _{ h, 1 } + \mathcal{O} \left( \omega ^{ - 3 } \right) ,
\end{equation*}
which is the same as that for the coefficient $ z _{ h, 1 } $ in the
original, unmodified impulse method.  This completes the proof.
\end{proof}

\section*{Acknowledgments}
The authors would like to thank Will Fong and Adrian Lew for helpful
conversations about the stability of AVI methods, Houman Owhadi for
suggesting that we examine the FPU problem, Dion O'Neale for advice
regarding the FPU simulation plots, Teng Zhang for pointing out a bug
in an earlier version of our FPU simulation code, and Jerry Marsden
and Reinout Quispel for their valuable suggestions and feedback about
this work.  We especially wish to acknowledge Marlis Hochbruck, Arieh
Iserles, and Christian Lubich for suggesting that we try to understand
the variational IMEX scheme as a modified impulse method; this
perspective led us directly to the results in~\autoref{sec:see}.
Finally, this paper benefited greatly from the thoughtful critiques
and suggestions of the anonymous referees, whom we also wish to thank.

\bibliographystyle{sternurl}
\bibliography{masterdb_stern}

\end{document}